\newtheorem{theorem}{Theorem}
\newtheorem{corollary}[theorem]{Corollary}
\newtheorem{lemma}[theorem]{Lemma}
\newtheorem{proof}[theorem]{Proof}
\newcommand{\beq}{\begin{eqnarray}}
\newcommand{\eeq}{\end{eqnarray}}
\newcommand{\beqs}{\begin{eqnarray*}}
	\newcommand{\eeqs}{\end{eqnarray*}}
\newcommand{\bpro}{\begin{pro}}
	\newcommand{\epro}{\end{pro}}
\newcommand{\blem}{\begin{lem}}
	\newcommand{\elem}{\end{lem}}
\newcommand{\bdfn}{\begin{dfn}}
	\newcommand{\edfn}{\end{dfn}}
\newcommand{\bcor}{\begin{cor}}
	\newcommand{\ecor}{\end{cor}}
\newcommand{\bthm}{\begin{thm}}
	\newcommand{\ethm}{\end{thm}}
\newcommand{\bex}{\begin{ex}}
	\newcommand{\eex}{\end{ex}}
\newcommand{\brmk}{\begin{rmk}}
	\newcommand{\ermk}{\end{rmk}}
\newcommand{\bpr}{\begin{pr}}
	\newcommand{\epr}{\end{pr}}
\newcommand{\benum}{\begin{enumerate}}
	\newcommand{\eenum}{\end{enumerate}}
\newcommand{\bitem}{\begin{itemize}}
	\newcommand{\eitem}{\end{itemize}}
\chardef\bslash=`\\
\numberwithin{equation}{section}%numerote les equations selon  les numeros de section%
\numberwithin{table}{section}%numerote les tableaux selon les numeros de section%
\numberwithin{theorem}{section}%numerote les definitions, theorem, lemma, et autres selon  les numeros de section%
\begin{document}
	\begin{center}
	{\Large {$\mathcal{R}(p,q)$-trinomial probability distribution: properties and particular cases}}\\
	\vspace{0,5cm}
	Fridolin Melong\\
	% and Mahouton Norbert Hounkonnou$^{\ddagger}$\\
	\vspace{0.25cm}
	{\em  Institut f\"ur Mathematik, Universit\"at Z\"urich,\\
		Winterthurerstrasse 190, CH-8057 Z\"urich, Switzerland}\\
	{\em fridomelong@gmail.com}
	%		 {\em$^{\ddagger}$International Chair in Mathematical Physics
	%		 and Applications
	%	  (ICMPA-UNESCO Chair),
	%	   University of Abomey-Calavi,
	%		  072 B.P. 50 Cotonou,   Benin Republic, 	\\
	%		 and International Centre for Research and Advanced Studies in Mathematical and Computer Sciences, and Applications (ICRASMCSA), 072 B.P. 50 Cotonou, Benin Republic}
	\vspace{0.25cm}
\end{center}
\today

\vspace{0.5 cm}
\begin{abstract}
In this paper, we investigate the trinomial probability distribution of the first and second kind from  the  $\mathcal{R}(p,q)$-quantum algebras. Moreover, we compute their $\mathcal{R}(p,q)$-factorial moments and derive the corresponding covariance. Particular cases of trinomial probability distribution are deduced from the formalism developed.
\end{abstract}
{\noindent
	{\bf Keywords.}$\mathcal{R}(p,q)$-calculus, quantum algebras, trinomial distribution, factorial moments, $\mathcal{R}(p,q)$-covariance.\\
	MSC (2020)17B37, 60E05, 05A30
}
\tableofcontents

\section{Introduction}
Charalambos presented the $q-$ deformed Vandermonde and Cauchy formulae. Moreover, the $q-$ deformed univariate discrete probability distributions were investigated.Their properties and limiting distributions were derived \cite{CA1}.

Furthermore, the $q-$ deformed multinomial coefficients was defined and their recurrence relations were deduced. Also, the $q-$ deformed multinomial  and negative $q-$ deformed multinomial probability distributions of the first and second kind  were presented \cite{CA2}. 

The same author extended the multivariate $q-$ deformed Vandermonde and Cauchy formulae, and  the multivariate $q-$ Pol\'ya and inverse  $q-$ Pol\'ya were constructed \cite{CA3}. Moreover, the $q-$ factorial moments of the bivariate discrete distributions were investigated \cite{CA4}.

Let $p$ and $q$ be two positive real numbers such that $ 0<q<p<1.$ We consider a meromorphic function $\mathcal{R}$ defined on $\mathbb{C}\times\mathbb{C}$ by\cite{HB}:\begin{equation}\label{r10}
\mathcal{R}(u,v)= \sum_{s,t=-l}^{\infty}r_{st}u^sv^t,
\end{equation}
with an eventual isolated singularity at the zero, 
where $r_{st}$ are complex numbers, $l\in\mathbb{N}\cup\left\lbrace 0\right\rbrace,$ $\mathcal{R}(p^n,q^n)>0,  \forall n\in\mathbb{N},$ and $\mathcal{R}(1,1)=0$ by definition. We denote by $\mathbb{D}_{R}$ the bidisk \begin{eqnarray*}
	\mathbb{D}_{R}
	%&:=&
	%\prod_{j=1}^{2}\mathbb{D}_{R_j}\nonumber\\
	&=&\left\lbrace w=(w_1,w_2)\in\mathbb{C}^2: |w_j|<R_{j} \right\rbrace,
\end{eqnarray*}
where $R$ is the convergence radius of the series (\ref{r10}) defined by Hadamard formula as follows \cite{TN}:
\begin{eqnarray*}
	\lim\sup_{s+t \longrightarrow \infty} \sqrt[s+t]{|r_{st}|R^s_1\,R^t_2}=1.
\end{eqnarray*} 

We denote by 
${\mathcal O}(\mathbb{D}_R)$ the set of holomorphic functions defined
on $\mathbb{D}_R.$
%Denote by $\mathbb{D}
%_R$ =$\{z\in\mathbb{C}$ : $\vert z\vert < R \}$ a complex disc  and by
%${\mathcal O}(\mathbb{D}_R)$ the set of holomorphic functions defined
%on $\mathbb{D}_R$.
%\begin{definition}\cite{HounkBuk}

The  $\mathcal{R}(p,q)-$ deformed numbers is defined by \cite{HB}: 
\begin{eqnarray}\label{Rpqn}
[n]_{\mathcal{R}(p,q)}:= \mathcal{R}(p^n,q^n),\qquad n\in\mathbb{N},
\end{eqnarray}
the $\mathcal{R}(p,q)-$ deformed factorials and binomial coefficients are given  as:
\begin{eqnarray}\label{Rpqf}
[n]!_{\mathcal{R}(p,q)}:= \left\{\begin{array}{lr} 1 \quad \mbox{for} \quad n=0 \quad \\
\mathcal{R}(p,q)\cdots \mathcal{R}(p^n,q^n) \quad \mbox{for} \quad n\geq 1, \quad \end{array} \right.
\end{eqnarray}
and
\begin{small}
\begin{eqnarray}\label{Rpqbc}
\left[\begin{array}{c} n  \\ m\end{array} \right]_{\mathcal{R}(p,q)}:=
\frac{[n]!_{\mathcal{R}(p,q)}}{[m]!_{\mathcal{R}(p,q)}[n-m]!_{\mathcal{R}(p,q)}},\quad (n, m)\in\mathbb{N}\cup\{0\},\quad n\geq m.
\end{eqnarray}
\end{small}
%and satisfies the following  relation
%\begin{eqnarray}
%\left[\begin{array}{c} x  \\ y\end{array} \right]_{\mathcal{R}(p,q)}=
%\left[\begin{array}{c} x  \\ x-y\end{array} \right]_{\mathcal{R}(p,q)},\quad x, y= 0, 1, 2, \cdots;\quad x\geq y.
%\end{eqnarray}
%%holds.

We consider the following linear operators   on ${\mathcal O}(\mathbb{D}_R)$  given by:
%(see \cite{HounkBuk} and references therein for more details):
\begin{eqnarray}\label{operat}
&&\quad Q: \varphi \longmapsto Q\varphi(z) := \varphi(qz)
\nonumber \\
&&\quad P: \varphi \longmapsto P\varphi(z): = \varphi(pz)
,
\end{eqnarray}
leading to define  the $\mathcal{R}(p,q)-$ deformed derivative:
\begin{equation}{\label{deriva1}}
\partial_{{\mathcal R},p,q} := \partial_{p,q}\frac{p - q}{P-Q}{\mathcal R}(P, Q)
= \frac{p - q}{pP-qQ}{\mathcal R}(pP, qQ)\partial_{p,q},
\end{equation}
where $\partial_{p,q}$ is the $(p,q)-$ derivative:
\begin{eqnarray}
\partial_{p,q}:\varphi \longmapsto
\partial_{p,q}\varphi(z) := \frac{\varphi(pz) - \varphi(qz)}{z(p-q)}.
\end{eqnarray}

The quantum algebra associated with the $\mathcal{R}(p,q)-$ deformation, denoted by 
$\mathcal{A}_{\mathcal{R}(p,q)}$ is generated by the
set of operators $\{1, A, A^{\dagger}, N\}$ satisfying the following
commutation relations \cite{HB1}:
\begin{eqnarray}
&& \label{algN1}
\quad A A^\dag= [N+1]_{\mathcal{R}(p,q)},\quad\quad\quad A^\dag  A = [N]_{\mathcal{R}(p,q)}.
\cr&&\left[N,\; A\right] = - A, \qquad\qquad\quad \left[N,\;A^\dag\right] = A^\dag
\end{eqnarray}
with its realization on ${\mathcal O}(\mathbb{D}_R)$ given by:
\begin{eqnarray*}\label{algNa}
	A^{\dagger} := z,\qquad A:=\partial_{\mathcal{R}(p,q)}, \qquad N:= z\partial_z,
\end{eqnarray*}
where $\partial_z:=\frac{\partial}{\partial z}$ is the usual derivative on $\mathbb{C}.$

The $\mathcal{R}(p,q)-$ deformed numbers \eqref{Rpqn} can be rewritten as follows \cite{HMD}:
\begin{eqnarray}
[n]_{\mathcal{R}(p,q)}=\frac{\phi^{n}_1-\phi^{n}_2}{ \phi_1-\phi_2}, \quad \phi_1\neq \phi_2,
\end{eqnarray}
where $\big(\phi_i\big)_{i\in\{1,2\}}$ are functions  of the parameters deformations $p$ and $q.$

The following relations hold \cite{HMRC}:
\begin{eqnarray}\label{011}
[x]_{\mathcal{R}(p^{-1},q^{-1})} &=& (\phi_1\,\phi_2)^{1-x}\,[x]_{\mathcal{R}(p,q)},\\
\,[r]_
{\mathcal{R}(p^{-1},q^{-1})}!&=& (\phi_1\,\phi_2)^{- {r  \choose 2}}\,\,[r]_{\mathcal{R}(p,q)}!,\label{014}\\
\,{[x]_{r,\mathcal{R}(p^{-1},q^{-1})}}
&=& (\phi_1\,\phi_2)^{-xr + {r +1 \choose 2}}\,{[x]_{r,\mathcal{R}(p,q)}}\label{015}.
\end{eqnarray}
For $a,b\in\mathbb{N},$ the  $\mathcal{R}(p,q)$- deformed shifted factorial is defined by \cite{HMD}:
\begin{equation*}\label{a}
\big(u \oplus v\big)^n_{\mathcal{R}(p,q)}: = \displaystyle \prod_{i=1}^{n}\big(u\,\phi^{i-1}_1 + v\,\phi^{i-1}_2\big),\quad \mbox{with}\quad  \big(u \oplus v\big)^0_{\mathcal{R}(p,q)}: =1.
\end{equation*}
{Analogously, 
	\begin{eqnarray*}
		\big(u \ominus v\big)^n_{\mathcal{R}(p,q)}: = \displaystyle \prod_{i=1}^{n}\big(u\,\phi^{i-1}_1 - b\,\phi^{i-1}_2\big),\quad \mbox{with}\quad  \big(u \ominus v\big)^0_{\mathcal{R}(p,q)}: =1.
	\end{eqnarray*}
	The $\mathcal{R}(p,q)$- deformed factorial of $u$ of order $r$ is defined by\cite{HMRC}:
	\begin{eqnarray*}
	[u]_{r,\mathcal{R
		}(p,q)}=\prod^{r}_{i=1}[u-i+1]_{\mathcal{R
		}(p,q)},\quad r\in\mathbb{N},
	\end{eqnarray*}

Furthermore, the generalized Vandermonde, Cauchy formulae and univariate probability distributions induced from the $\mathcal{R}(p,q)-$ deformed quantum algebras are investigated in \cite{HMD}. 

Our aims is to determine the trinomial probability distribution of the first and second kind with properties  associated to the  $\mathcal{R}(p,q)-$ deformed quantum algebras \cite{HB1}. 

This paper is organized as follows: In section $2,$ we presente the $\mathcal{R}(p,q)$- trinomial probability distribution of the first and second kind. Also their negative counterparts. Besides, the properties namely ($\mathcal{R}(p,q)$-factorial moments and covariance) are investigated. Section $3$ is reserved to deduce the relevant particular cases corresponding to some quantum algbras known in the literature.
\section{Generalized trinomial probability distribution}
The trinomial probability distribution of the first and second kind from the $\mathcal{R}(p,q)$-deformed quantum algebras are investigated. Their factorial moments are also computed. Besides, the $\mathcal{R}(p,q)$- covariance is deduced.
\subsection{ $\mathcal{R}(p,q)$-trinomial  distribution of the first kind}
The probability function of the $\mathcal{R}(p,q)$-random vector $\underline{Y}=\big(Y_1,Y_2\big)$ of the $\mathcal{R}(p,q)$- trinomial probability distribution of the first kind, with parameters $n,$ $\underline{\alpha}=\big(\alpha_1,\alpha_2\big),$ $p,$ and, $q$ is given by:
\begin{small}
	\begin{eqnarray*}
	P\big(Y_1=y_1,Y_2=y_2\big)=\genfrac{[}{]}{0pt}{}{n}{y_1,y_2}_{\mathcal {R}(p,q)}\frac{\alpha^{y_1}_1\alpha^{y_2}_2\phi^{{n-y_1\choose 2}+{n-y_2\choose 2}}_1\phi^{{y_1\choose 2}+{y_2\choose 2}}_2}{(1 \oplus \alpha_1)^{n}_{\mathcal{R}(p,q)}(1 \oplus \alpha_2)^{n-y_1}_{\mathcal{R}(p,q)}},
	\end{eqnarray*}
\end{small}
where $y_j\in\{0,1,\cdots,n\}, y_1+y_2\leq n, s_j=\displaystyle\sum_{i=1}^{j}y_j, 0<\alpha_j<1$, and $j\in\{1,2\}.$
\begin{theorem}
	The $\mathcal{R}(p,q)$-factorial moments of the  $\mathcal{R}(p,q)$-trinomial probability distribution of the first kind, with parameters $n,$ $\underline{\alpha}=\big(\alpha_1,\alpha_2\big),$ $p,$ and, $q$ are determined by the following relations:
	\begin{small}
		\begin{eqnarray}\label{tfk1}
		E\big([Y_1]_{m_1,\mathcal{R}(p,q)}\big)=\frac{\alpha^{m_1}_1\,\phi^{m_1\choose 2}_2\,[n]_{m_1,\mathcal{R}(p,q)}}{(1 \oplus \alpha_1)^{m_1}_{\mathcal{R}(p,q)}},\,m_1\in\{0,1,\cdots,n\},
		\end{eqnarray}
		\begin{eqnarray}\label{tfk2}
		E\big([Y_2]_{m_2,\mathcal{R}(p,q)}|Y_1=y_1\big)=\frac{\alpha^{m_2}_2\phi^{m_2\choose 2}_2\,[n-y_1]_{m_2,\mathcal{R}(p,q)}}{(1 \oplus \alpha_2)^{m_2}_{\mathcal{R}(p,q)}},
		\end{eqnarray}
		with $m_2\in\{0,1,\cdots,n-y_1\}.$
		\begin{eqnarray}\label{tfk3}
		E\big([Y_2]_{m_2,\mathcal{R}(p,q)}\big)=\frac{\alpha^{m_2}_2\,\phi^{{m_2\choose 2}+m_2(n-m_2)}_1\phi^{m_2\choose 2}_2\,[n]_{m_2,\mathcal{R}(p,q)}}{(1 \oplus \alpha_2)^{m_2}_{\mathcal{R}(p,q)}(\phi^{n-m_2}_1 \oplus \alpha_1\phi^{n-m_2}_2)^{m_2}_{\mathcal{R}(p,q)}},
		\end{eqnarray}
		with $m_2\in\{0,1,\cdots,n\}$ and 
		\begin{equation}\label{tfk4}
		E\big([Y_1]_{m_1,\mathcal{R}(p,q)}[Y_2]_{m_2,\mathcal{R}(p,q)}\big)
		=\frac{\phi^{{m_2\choose 2}+m_2(n-m_2)}_1\,\phi^{{m_1\choose 2}+{m_2 \choose 2}}_2\,\alpha^{m_1}_1\alpha^{m_2}_2[n]_{m_1+m_2,\mathcal{R}(p,q)}}{(1 \oplus \alpha_1)^{m_1}_{\mathcal{R}(p,q)}(1 \oplus \alpha_2)^{m_2}_{\mathcal{R}(p,q)}(\phi^{n-m_2}_1 \oplus \alpha_1\phi^{n-m_2}_2)^{m_2}_{\mathcal{R}(p,q)}},
		\end{equation}
		where $m_1\in\{0,1,\cdots, n-m_2\}$ and $m_2\in\{0,1,\cdots, n\}.$
	\end{small}
\end{theorem}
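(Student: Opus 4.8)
\medskip
\noindent\textbf{Proof plan.}
The plan is to read off all four identities from the defining sums by combining two ingredients: the absorption of a deformed factorial of order $m$ into a deformed (multi)nomial coefficient, namely $[y]_{m,\mathcal{R}(p,q)}\genfrac{[}{]}{0pt}{}{N}{y}_{\mathcal{R}(p,q)}=[N]_{m,\mathcal{R}(p,q)}\genfrac{[}{]}{0pt}{}{N-m}{y-m}_{\mathcal{R}(p,q)}$, and the $\mathcal{R}(p,q)$-binomial theorem
\[
(u\oplus v)^{N}_{\mathcal{R}(p,q)}=\sum_{k=0}^{N}\genfrac{[}{]}{0pt}{}{N}{k}_{\mathcal{R}(p,q)}\,\phi_1^{\binom{N-k}{2}}\,\phi_2^{\binom{k}{2}}\,u^{N-k}v^{k},
\]
which is the generating identity behind the $\mathcal{R}(p,q)$-Vandermonde and Cauchy formulae of \cite{HMD}. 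Throughout I will use the elementary exponent splittings $\binom{k+m}{2}=\binom{k}{2}+km+\binom{m}{2}$ and $\binom{x+a}{2}=\binom{x}{2}+ax+\binom{a}{2}$ to realign the $\phi_1,\phi_2$ powers after each index shift, together with the shifted-factorial splitting $(1\oplus\alpha)^{n}_{\mathcal{R}(p,q)}=(1\oplus\alpha)^{\,n-m}_{\mathcal{R}(p,q)}\,(\phi_1^{\,n-m}\oplus\alpha\phi_2^{\,n-m})^{m}_{\mathcal{R}(p,q)}$, which is immediate from the product form of the deformed shifted factorial and which is exactly what manufactures the cross factor in \eqref{tfk3} and \eqref{tfk4}.

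First I would establish the conditional moment \eqref{tfk2}. Factoring $\genfrac{[}{]}{0pt}{}{n}{y_1,y_2}_{\mathcal{R}(p,q)}=\genfrac{[}{]}{0pt}{}{n}{y_1}_{\mathcal{R}(p,q)}\genfrac{[}{]}{0pt}{}{n-y_1}{y_2}_{\mathcal{R}(p,q)}$, the sum defining $E\big([Y_2]_{m_2,\mathcal{R}(p,q)}\mid Y_1=y_1\big)$ involves only $y_2$; absorbing $[y_2]_{m_2,\mathcal{R}(p,q)}$ into $\genfrac{[}{]}{0pt}{}{n-y_1}{y_2}_{\mathcal{R}(p,q)}$ pulls out $[n-y_1]_{m_2,\mathcal{R}(p,q)}$, and the shift $k=y_2-m_2$ followed by the binomial theorem collapses the residual sum into a deformed shifted factorial. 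Dividing by the marginal $P(Y_1=y_1)$ — the $m_2=0$ instance of the very same computation — and simplifying the resulting ratio of shifted factorials yields \eqref{tfk2}. The marginal law of $Y_1$ obtained along the way is one of the univariate $\mathcal{R}(p,q)$-distributions studied in \cite{HMD}, so the identical reduction applied to $[y_1]_{m_1,\mathcal{R}(p,q)}$ and $\genfrac{[}{]}{0pt}{}{n}{y_1}_{\mathcal{R}(p,q)}$ gives \eqref{tfk1}.

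For the joint moment \eqref{tfk4} I would invoke the tower property $E\big([Y_1]_{m_1}[Y_2]_{m_2}\big)=E\big([Y_1]_{m_1}\,E([Y_2]_{m_2}\mid Y_1)\big)$ and substitute \eqref{tfk2}, which reduces the task to the mixed moment $E\big([Y_1]_{m_1,\mathcal{R}(p,q)}[n-Y_1]_{m_2,\mathcal{R}(p,q)}\big)$ of the $\mathcal{R}(p,q)$-binomial variable $Y_1$. There the double absorption $[y_1]_{m_1}[n-y_1]_{m_2}\genfrac{[}{]}{0pt}{}{n}{y_1}_{\mathcal{R}(p,q)}=[n]_{m_1+m_2,\mathcal{R}(p,q)}\genfrac{[}{]}{0pt}{}{n-m_1-m_2}{y_1-m_1}_{\mathcal{R}(p,q)}$ clears the coefficient, the shift $j=y_1-m_1$ and the binomial theorem close the sum, and the splitting identity applied to the surviving $(1\oplus\alpha_1)^{n}_{\mathcal{R}(p,q)}$ cancels one factor and leaves $(\phi_1^{\,n-m_2}\oplus\alpha_1\phi_2^{\,n-m_2})^{m_2}_{\mathcal{R}(p,q)}$ in the denominator, while the exponent splittings assemble the powers $\binom{m_2}{2}+m_2(n-m_2)$ and $\binom{m_1}{2}+\binom{m_2}{2}$. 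Setting $m_2=0$ then recovers \eqref{tfk1} and setting $m_1=0$ recovers \eqref{tfk3}, so \eqref{tfk4} is the master formula and \eqref{tfk1}, \eqref{tfk3} are its degenerate cases.

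The conceptual content is light; the real work, and the step I expect to be the main obstacle, is the bookkeeping of the two families of exponents $\binom{n-y_j}{2}$ and $\binom{y_j}{2}$ through the successive translations $k=y_2-m_2$ and $j=y_1-m_1$, so that the leftover sums present \emph{exactly} the monomials $u^{N-k}v^{k}$ demanded by the $\mathcal{R}(p,q)$-binomial theorem, and the subsequent reduction of the resulting ratios of deformed shifted factorials to the compact closed forms of the statement. It is here that the splitting identity (and, if needed, the inversion relations \eqref{011}--\eqref{015}) is decisive, in particular in producing the cross factor $(\phi_1^{\,n-m_2}\oplus\alpha_1\phi_2^{\,n-m_2})^{m_2}_{\mathcal{R}(p,q)}$ with the correct prefactor $\phi_1^{\,m_2(n-m_2)}$; once these identities are in place the remaining algebra is routine.
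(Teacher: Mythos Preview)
Your proposal is correct and follows essentially the same route as the paper: identify the marginal of $Y_1$ and the conditional of $Y_2\mid Y_1$ as univariate $\mathcal{R}(p,q)$-binomials (the paper simply cites \cite{HMD} here rather than rederiving), then use the tower property together with the absorption identities $[y_1]_{m_1}[n-y_1]_{m_2}\genfrac{[}{]}{0pt}{}{n}{y_1}_{\mathcal{R}(p,q)}=[n]_{m_1+m_2}\genfrac{[}{]}{0pt}{}{n-m_1-m_2}{y_1-m_1}_{\mathcal{R}(p,q)}$, the $\mathcal{R}(p,q)$-binomial formula, and the splitting of $(1\oplus\alpha_1)^{n}_{\mathcal{R}(p,q)}$ to obtain \eqref{tfk3} and \eqref{tfk4}. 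Your observation that \eqref{tfk1} and \eqref{tfk3} are the $m_2=0$ and $m_1=0$ specializations of \eqref{tfk4} is a minor organizational economy the paper does not exploit, but otherwise the arguments coincide.
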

\begin{proof}
	The $\mathcal{R}(p,q)$- random variable $Y_1$ follows the $\mathcal{R}(p,q)$- binomial probability distribution of the first kind as:
	\begin{eqnarray}
	P\big(Y_1=y_1\big)=\genfrac{[}{]}{0pt}{}{n}{y_1}_{\mathcal {R}(p,q)}\frac{\alpha^{y_1}_1\phi^{{n-y_1\choose 2}}_1\phi^{{y_1\choose 2}}_2}{(1 \oplus \alpha_1)^{n}_{\mathcal{R}(p,q)}},\,y_1\in\{0,1,\cdots,n\}.
	\end{eqnarray}
	From \cite{HMD}, the $\mathcal{R}(p,q)$- factorial moments of $Y_1$ are given by \eqref{tfk1}. Moreover, the conditional probability distribution of $Y_2,$ given that $Y_1=y_1,$ is the $\mathcal{R}(p,q)$- binomial probability distribution of the first kind, with mass function :
	\begin{eqnarray*}
		P\big(Y_2=y_2|Y_1=y_1\big)=\genfrac{[}{]}{0pt}{}{n-y_1}{y_2}_{\mathcal {R}(p,q)}\frac{\alpha^{y_2}_2\phi^{{n-y_1-y_2\choose 2}}_1\phi^{{y_2\choose 2}}_2}{(1 \oplus \alpha_2)^{n-y_1}_{\mathcal{R}(p,q)}},\,y_2\in\{0,1,\cdots,n-y_1\}.
	\end{eqnarray*}
	Using also \cite{HMD}, the conditional $\mathcal{R}(p,q)$-factorial moments of $Y_2,$ given that $Y_1=y_1,$ are furnished by \eqref{tfk2}. Besides, we determine the $\mathcal{R}(p,q)$- factorial moments of $Y_2$  according to the formula:
	\begin{eqnarray*}
		E\big([Y_2]_{m_2,\mathcal{R}(p,q)}\big)&=&E\bigg(E\big([Y_2]_{m_2,\mathcal{R}(p,q)}|Y_1\big)\bigg)\nonumber\\&=& \frac{\alpha^{m_2}_2\phi^{m_2\choose 2}_2}{(1 \oplus \alpha_2)^{m_2}_{\mathcal{R}(p,q)}}\,E\big([n-Y_1]_{m_2,\mathcal{R}(p,q)}\big).
	\end{eqnarray*}
	Obviously,
	\begin{eqnarray*}
		E\big([n-Y_1]_{m_2,\mathcal{R}(p,q)}\big)=\sum_{y_1=0}^{n-m_2}[n-y_1]_{m_2,\mathcal{R}(p,q)}\genfrac{[}{]}{0pt}{}{n}{y_1}_{\mathcal {R}(p,q)}\frac{\alpha^{y_1}_1\phi^{{n-y_1\choose 2}}_1\phi^{{y_1\choose 2}}_2}{(1 \oplus \alpha_1)^{n}_{\mathcal{R}(p,q)}}.
	\end{eqnarray*}
	From the relation
	\begin{eqnarray*}
		[n-y_1]_{m_2,\mathcal{R}(p,q)}\,\genfrac{[}{]}{0pt}{}{n}{y_1}_{\mathcal {R}(p,q)}=[n]_{m_2,\mathcal{R}(p,q)}\,\genfrac{[}{]}{0pt}{}{n-m_2}{y_1}_{\mathcal {R}(p,q)}
	\end{eqnarray*}
	and the $\mathcal{R}(p,q)$-binomial formula \cite{HMRC}:
	\begin{eqnarray*}
		\big(1 \oplus t\big)^n_{\mathcal{R}(p,q)}=\sum_{k=0}^{n}\genfrac{[}{]}{0pt}{}{n}{k}_{\mathcal {R}(p,q)}\,\phi^{{n-k\choose 2}}_1\,\phi^{{k\choose 2}}_2\,t^{k},\,t\in\mathbb{R},
	\end{eqnarray*}
	we have:
	\begin{eqnarray*}
		E\big([n-Y_1]_{m_2,\mathcal{R}(p,q)}\big)&=&\sum_{y_1=0}^{n-m_2}[n]_{m_2,\mathcal{R}(p,q)}\,\genfrac{[}{]}{0pt}{}{n-m_2}{y_1}_{\mathcal {R}(p,q)}\frac{\alpha^{y_1}_1\phi^{{n-y_1\choose 2}}_1\phi^{{y_1\choose 2}}_2}{(1 \oplus \alpha_1)^{n}_{\mathcal{R}(p,q)}}\nonumber\\
		&=&\frac{\phi^{{m_2\choose 2}+m_2(n-m_2)}_1[n]_{m_2,\mathcal{R}(p,q)}\,(1 \oplus \alpha_1)^{n-m_2}_{\mathcal{R}(p,q)}}{(1 \oplus \alpha_1)^{n}_{\mathcal{R}(p,q)}}\nonumber\\
		&=& \frac{\phi^{{m_2\choose 2}+m_2(n-m_2)}_1[n]_{m_2,\mathcal{R}(p,q)}}{(\phi^{n-m_2}_1 \oplus \alpha_1\,\phi^{n-m_2}_2)^{m_2}_{\mathcal{R}(p,q)}}.
	\end{eqnarray*}
	Thus,
	\begin{eqnarray*}
		E\big([Y_2]_{m_2,\mathcal{R}(p,q)}\big)= \frac{\alpha^{m_2}_2\phi^{m_2 \choose 2}_2}{(1 \oplus \alpha_2)^{m_2}_{\mathcal{R}(p,q)}}\,\frac{[n]_{m_2,\mathcal{R}(p,q)}}{(\phi^{n-m_2}_1 \oplus \alpha_1\,\phi^{n-m_2}_2)^{m_2}_{\mathcal{R}(p,q)}}.
	\end{eqnarray*}
	So, the joint $\mathcal{R}(p,q)$- factorial moments $E\big([Y_1]_{m_1,\mathcal{R}(p,q)}[Y_2]_{m_2,\mathcal{R}(p,q)}\big), m_2\in\{0,1,\cdots, n-m_1\}, m_1\in\{0,1,\cdots,n\}$ can be calculated by applying the relation:
	\begin{eqnarray*}
		E\big([Y_1]_{m_1,\mathcal{R}(p,q)}[Y_2]_{m_2,\mathcal{R}(p,q)}\big)&=&E\bigg(E\big([Y_1]_{m_1,\mathcal{R}(p,q)}[Y_2]_{m_2,\mathcal{R}(p,q)}\big)|Y_1\bigg)\nonumber\\
		&=& \frac{\alpha^{m_2}_2\phi^{m_2 \choose 2}_2}{(1 \oplus \alpha_2)^{m_2}_{\mathcal{R}(p,q)}}\,E\big([Y_1]_{m_1,\mathcal{R}(p,q)}[n-Y_1]_{m_2,\mathcal{R}(p,q)}\big).
	\end{eqnarray*}
	Since
	\begin{eqnarray*}
		E\big([Y_1]_{m_1,\mathcal{R}(p,q)}[n-Y_1]_{m_2,\mathcal{R}(p,q)}\big)&=& \sum_{y_1=m_1}^{n-m_2}[y_1]_{m_1,\mathcal{R}(p,q)}[n-y_1]_{m_2,\mathcal{R}(p,q)}\nonumber\\
		&\times& \genfrac{[}{]}{0pt}{}{n}{y_1}_{\mathcal {R}(p,q)}\frac{\alpha^{y_1}_1\phi^{{n-y_1\choose 2}}_1\phi^{{y_1\choose 2}}_2}{(1 \oplus \alpha_1)^{n}_{\mathcal{R}(p,q)}}.
	\end{eqnarray*}
	Using the relation
	\begin{eqnarray*}
		[y_1]_{m_1,\mathcal{R}(p,q)}\,[n-y_1]_{m_2,\mathcal{R}(p,q)} \genfrac{[}{]}{0pt}{}{n}{y_1}_{\mathcal {R}(p,q)}=[n]_{m_1+m_2,\mathcal{R}(p,q)}\, \genfrac{[}{]}{0pt}{}{n-m_1-m_2}{y_1-m_1}_{\mathcal {R}(p,q)}
	\end{eqnarray*}
	and the $\mathcal{R}(p,q)$-binomial formula \cite{HMRC}, we obtain:
	\begin{small}
		\begin{eqnarray*}
			E\big([Y_1]_{m_1,\mathcal{R}(p,q)}[n-Y_1]_{m_2,\mathcal{R}(p,q)}\big)&=&[n]_{m_1+m_2,\mathcal{R}(p,q)}\alpha^{m_1}_1\phi^{{m_1\choose 2}}_2\nonumber\\
			&\times& \sum_{y_1=m_1}^{n-m_2-m_1} \genfrac{[}{]}{0pt}{}{n-m_1-m_2}{y_1-m_1}_{\mathcal {R}(p,q)}\frac{\big(\alpha_1\phi^{m_1}_2\big)^{y_1-m_1}\phi^{{y_1-m_1\choose 2}}_2}{(1 \oplus \alpha_1)^{n}_{\mathcal{R}(p,q)}}\nonumber\\
			&=&\frac{[n]_{m_1+m_2,\mathcal{R}(p,q)}\alpha^{m_1}_1\phi^{{m_1\choose 2}}_2\,(\phi^{m_1}_1 \oplus \alpha_1\phi^{m_1}_2)^{n-m_1-m_2}_{\mathcal{R}(p,q)}}{(1 \oplus \alpha_1)^{n}_{\mathcal{R}(p,q)}}.
		\end{eqnarray*}
		Thus, 
		\begin{eqnarray*}
			E\big([Y_1]_{m_1,\mathcal{R}(p,q)}[Y_2]_{m_2,\mathcal{R}(p,q)}\big)
			&=& \frac{[n]_{m_1+m_2,\mathcal{R}(p,q)}\alpha^{m_1}_1\alpha^{m_2}_2\phi^{{m_1\choose 2}+{m_2 \choose 2}}_2(\phi^{m_1}_1 \oplus \alpha_1\phi^{m_1}_2)^{n-m_1-m_2}_{\mathcal{R}(p,q)}}{(1 \oplus \alpha_2)^{m_2}_{\mathcal{R}(p,q)}(1 \oplus \alpha_1)^{n}_{\mathcal{R}(p,q)}}\nonumber\\
			&=&\frac{[n]_{m_1+m_2,\mathcal{R}(p,q)}\alpha^{m_1}_1\alpha^{m_2}_2\phi^{{m_1\choose 2}+{m_2 \choose 2}}_2}{(1 \oplus \alpha_1)^{m_1}_{\mathcal{R}(p,q)}(1 \oplus \alpha_2)^{m_2}_{\mathcal{R}(p,q)}(\phi^{n-m_2}_1 \oplus \alpha_1\phi^{n-m_2}_2)^{m_2}_{\mathcal{R}(p,q)}}.
		\end{eqnarray*}
	\end{small}
\end{proof}
\begin{corollary}
	The $\mathcal{R}(p,q)$- covariance of the $\mathcal{R}(p,q)$- trinomial probability distribution of the first kind is presented by:
	\begin{eqnarray*}
		Cov\big([Y_1]_{\mathcal{R}(p,q)},[Y_2]_{\mathcal{R}(p,q)}\big)
		=\frac{\phi^{n-1}_1\alpha_1\alpha_2[n]_{\mathcal{R}(p,q)}\big([n-1]_{\mathcal{R}(p,q)}-[n]_{\mathcal{R}(p,q)}\big)}{(1 + \alpha_1)(1 + \alpha_2)(\phi^{n-1}_1 + \alpha_1\phi^{n-1}_2)}.
	\end{eqnarray*}
\end{corollary}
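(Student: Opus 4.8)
The plan is to read the covariance directly off the factorial-moment formulas already established in the Theorem, since the deformed variable $[Y_j]_{\mathcal{R}(p,q)}$ is exactly the order-one factorial $[Y_j]_{1,\mathcal{R}(p,q)}$. So I would start from the defining identity
\[
Cov\big([Y_1]_{\mathcal{R}(p,q)},[Y_2]_{\mathcal{R}(p,q)}\big)
= E\big([Y_1]_{1,\mathcal{R}(p,q)}[Y_2]_{1,\mathcal{R}(p,q)}\big)
- E\big([Y_1]_{1,\mathcal{R}(p,q)}\big)\,E\big([Y_2]_{1,\mathcal{R}(p,q)}\big),
\]
and then substitute $m_1=m_2=1$ into \eqref{tfk1}, \eqref{tfk3} and \eqref{tfk4}.

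First I would record the three needed specializations. Setting $m_1=1$ in \eqref{tfk1} and using $\binom{1}{2}=0$, $[n]_{1,\mathcal{R}(p,q)}=[n]_{\mathcal{R}(p,q)}$ and $(1\oplus\alpha_1)^1_{\mathcal{R}(p,q)}=1+\alpha_1$ gives $E([Y_1]_{\mathcal{R}(p,q)})=\frac{\alpha_1[n]_{\mathcal{R}(p,q)}}{1+\alpha_1}$. Setting $m_2=1$ in \eqref{tfk3}, reducing the exponent $\binom{1}{2}+1\cdot(n-1)=n-1$ and collapsing the single-factor shifted factorial $(\phi_1^{n-1}\oplus\alpha_1\phi_2^{n-1})^1_{\mathcal{R}(p,q)}=\phi_1^{n-1}+\alpha_1\phi_2^{n-1}$ yields $E([Y_2]_{\mathcal{R}(p,q)})=\frac{\alpha_2\phi_1^{n-1}[n]_{\mathcal{R}(p,q)}}{(1+\alpha_2)(\phi_1^{n-1}+\alpha_1\phi_2^{n-1})}$. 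Finally, putting $m_1=m_2=1$ in \eqref{tfk4} and using the order-two reduction $[n]_{2,\mathcal{R}(p,q)}=[n]_{\mathcal{R}(p,q)}[n-1]_{\mathcal{R}(p,q)}$ gives the mixed moment $E([Y_1]_{\mathcal{R}(p,q)}[Y_2]_{\mathcal{R}(p,q)})=\frac{\phi_1^{n-1}\alpha_1\alpha_2[n]_{\mathcal{R}(p,q)}[n-1]_{\mathcal{R}(p,q)}}{(1+\alpha_1)(1+\alpha_2)(\phi_1^{n-1}+\alpha_1\phi_2^{n-1})}$.

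The last step is purely a bookkeeping cancellation. Multiplying the two single-variable moments produces $E([Y_1]_{\mathcal{R}(p,q)})E([Y_2]_{\mathcal{R}(p,q)})=\frac{\phi_1^{n-1}\alpha_1\alpha_2[n]^2_{\mathcal{R}(p,q)}}{(1+\alpha_1)(1+\alpha_2)(\phi_1^{n-1}+\alpha_1\phi_2^{n-1})}$, which carries the same prefactor $\frac{\phi_1^{n-1}\alpha_1\alpha_2[n]_{\mathcal{R}(p,q)}}{(1+\alpha_1)(1+\alpha_2)(\phi_1^{n-1}+\alpha_1\phi_2^{n-1})}$ as the mixed moment. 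Subtracting, this common prefactor multiplies $[n-1]_{\mathcal{R}(p,q)}-[n]_{\mathcal{R}(p,q)}$, which is precisely the claimed formula. No genuine obstacle arises here; the only points demanding care are the evaluations $\binom{1}{2}=0$, the reduction of $[n]_{2,\mathcal{R}(p,q)}$ to $[n]_{\mathcal{R}(p,q)}[n-1]_{\mathcal{R}(p,q)}$, and the collapse of the first-order shifted factorials to ordinary sums, after which the two $[n]_{\mathcal{R}(p,q)}$-weighted fractions align and the prefactor pulls out cleanly.
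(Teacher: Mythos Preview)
Your proof is correct and follows the same approach as the paper: both specialize the factorial-moment formulas to $m_1=m_2=1$ and then apply the definition $Cov=E(XY)-E(X)E(Y)$. Your version is in fact a bit more careful in explicitly recording the reductions $\binom{1}{2}=0$, $[n]_{2,\mathcal{R}(p,q)}=[n]_{\mathcal{R}(p,q)}[n-1]_{\mathcal{R}(p,q)}$, and the collapse of the order-one shifted factorials, and you correctly cite \eqref{tfk4} for the mixed moment (the paper's reference to \eqref{tfk2} there is a slip).
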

\begin{proof}
	By definition, the covariance of $[Y_1]_{\mathcal{R}(p,q)}$ and $[Y_2]_{\mathcal{R}(p,q)}$ is given by 
	\begin{small}
	\begin{eqnarray*}
	Cov\big([Y_1]_{\mathcal{R}(p,q)},[Y_2]_{\mathcal{R}(p,q)}\big)=E\big([Y_1]_{\mathcal{R}(p,q)}[Y_2]_{\mathcal{R}(p,q)}\big)-E\big([Y_1]_{\mathcal{R}(p,q)}\big)E\big([Y_2]_{\mathcal{R}(p,q)}\big).
	\end{eqnarray*}
	\end{small}
Taking $m_1=m_2=1, $ in the relations \eqref{tfk1}, \eqref{tfk2}, and \eqref{tfk3}, we get 
\begin{eqnarray*}
E\big([Y_1]_{\mathcal{R}(p,q)}\big)E\big([Y_2]_{\mathcal{R}(p,q)}\big)=\frac{\alpha_1\,[n]_{\mathcal{R}(p,q)}}{(1 + \alpha_1)}\frac{\alpha_2\,\phi^{n-1}_1\,[n]_{\mathcal{R}(p,q)}}{(1 + \alpha_2)(\phi^{n-1}_1 + \alpha_1\phi^{n-1}_2)}
\end{eqnarray*}
and 
\begin{eqnarray*}
E\big([Y_1]_{\mathcal{R}(p,q)}[Y_2]_{\mathcal{R}(p,q)}\big)
=\frac{\phi^{n-1}_1\,\alpha_1\alpha_2[n]_{2,\mathcal{R}(p,q)}}{(1 + \alpha_1)(1 + \alpha_2)(\phi^{n-1}_1 + \alpha_1\phi^{n-1}_2)}.
\end{eqnarray*}
After computation, the result follows. 
\end{proof}

\subsection{Negative $\mathcal{R}(p,q)$-trinomial  distribution of the first kind}
Let $U_n$ be the number of successes until the occurrence of the $n^{th}$ failure, in a sequence of independent Bernoulli trials, with probability of success at the $i^{th}$ trial given as:
\begin{eqnarray*}
	p_i=\frac{\alpha\,\phi^{i-1}_2}{\phi^{i-1}_1 + \alpha\,\phi^{i-1}_2},\,i\in\mathbb{N}\cup\{0\}.
\end{eqnarray*}
\begin{lemma}
	The $\mathcal{R}(p,q)$- random variable $U_n$ obeys the negative $\mathcal{R}(p,q)$- binomial probability distribution of the first kind :
	\begin{eqnarray}\label{ntfka}
	P\big(U=u\big)=\genfrac{[}{]}{0pt}{}{n+u-1}{u}_{\mathcal {R}(p,q)}\frac{\alpha^{u}\,\phi^{n-u\choose 2}_1\phi^{u\choose 2}_2}{(1 \oplus \alpha)^{n+u}_{\mathcal{R}(p,q)}},\,u\in\mathbb{N}\cup\{0\}.
	\end{eqnarray} 
	Moreover, the $\mathcal{R}(p^{-1},q^{-1})$- factorial moments are presented by the relation
	\begin{eqnarray}\label{ntfkb}
	E\big([U_n]_{m,\mathcal{R}(p^{-1},q^{-1})}\big)=[n+m-1]_{m,\mathcal{R}(p^{-1},q^{-1})}\,\alpha^{m},\,m\in\mathbb{N}\cup\{0\}.
	\end{eqnarray}
\end{lemma}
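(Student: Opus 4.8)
The plan is to treat the two assertions separately: first derive the mass function \eqref{ntfka} from the underlying Bernoulli scheme, and then compute the factorial moments \eqref{ntfkb} directly from it. For the distribution, I would start from the definition of $U_n$: the event $\{U_n=u\}$ occurs precisely when, among the first $n+u-1$ trials, there are $u$ successes and $n-1$ failures, while the $(n+u)$-th trial is the $n$-th failure. By independence, a single such arrangement has probability $\prod_{i\in S}p_i\prod_{i\notin S}(1-p_i)$, where $S\subseteq\{1,\dots,n+u-1\}$ is the set of success positions. Since $p_i=\alpha\,\phi^{i-1}_2/(\phi^{i-1}_1+\alpha\,\phi^{i-1}_2)$, every factor shares the denominator $\phi^{i-1}_1+\alpha\,\phi^{i-1}_2$, so all arrangements contribute the common factor $\prod_{i=1}^{n+u}(\phi^{i-1}_1+\alpha\,\phi^{i-1}_2)=(1\oplus\alpha)^{n+u}_{\mathcal{R}(p,q)}$ in the denominator, and the numerator of a given arrangement is $\alpha^{u}\,\phi^{\sigma}_2\,\phi^{\binom{n+u}{2}-\sigma}_1$ with $\sigma=\sum_{i\in S}(i-1)$.

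Summing over all $u$-subsets $S$ then reduces to the Gaussian-type identity $\sum_S x^{\sigma}=x^{\binom{u}{2}}\genfrac{[}{]}{0pt}{}{n+u-1}{u}_{x}$ with $x=\phi_2/\phi_1$, and rewriting the ordinary Gaussian coefficient in terms of the $\mathcal{R}(p,q)$-binomial coefficient through $[k]_{\mathcal{R}(p,q)}=\phi^{k-1}_1\,[k]_{x}$ yields \eqref{ntfka}. This is where the $\mathcal{R}(p,q)$-binomial formula quoted earlier does the real work, and it is also the step at which one must carefully verify the exact power of $\phi_1$ in the final expression.

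For the factorial moments I would write $E\big([U_n]_{m,\mathcal{R}(p^{-1},q^{-1})}\big)=\sum_{u\ge m}[u]_{m,\mathcal{R}(p^{-1},q^{-1})}\,P\big(U_n=u\big)$, the lower limit being justified because $[u]_{m,\mathcal{R}(p^{-1},q^{-1})}$ contains the factor $[0]_{\mathcal{R}(p^{-1},q^{-1})}=\mathcal{R}(1,1)=0$ whenever $u<m$. I would then convert the inverse-parameter falling factorial to the direct one via relation \eqref{015}, and apply the absorption identity
\[
[u]_{m,\mathcal{R}(p,q)}\,\genfrac{[}{]}{0pt}{}{n+u-1}{u}_{\mathcal{R}(p,q)}=[n+m-1]_{m,\mathcal{R}(p,q)}\,\genfrac{[}{]}{0pt}{}{n+u-1}{u-m}_{\mathcal{R}(p,q)},
\]
which is the negative-binomial analogue of the absorption identity used in the proof of the previous theorem; it pulls $[n+m-1]_{m,\mathcal{R}(p,q)}$ out of the summation.

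After reindexing $v=u-m$, the residual series is, up to collected powers of $\phi_1\phi_2$, exactly $\sum_{v\ge 0}P'\big(U'=v\big)$ for a negative $\mathcal{R}(p,q)$-binomial of the first kind with $n$ replaced by $n+m$, hence equals $1$ by the normalization established in the first part (equivalently, by the $\mathcal{R}(p,q)$-binomial/Cauchy identity of \cite{HMRC,HMD}). It then remains to convert $[n+m-1]_{m,\mathcal{R}(p,q)}$ back to $[n+m-1]_{m,\mathcal{R}(p^{-1},q^{-1})}$ using \eqref{015} once more and to check that the accumulated powers of $\phi_1\phi_2$ cancel, leaving precisely $\alpha^{m}$. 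I expect the main obstacle to be exactly this exponent bookkeeping: tracking the powers of $\phi_1$ and $\phi_2$ through the inversion $p\mapsto p^{-1},\,q\mapsto q^{-1}$ and confirming that the reindexed sum is a genuinely normalized distribution (which also requires noting convergence of the infinite series for $0<\alpha<1$, so that the interchange of summation is legitimate). The combinatorial identities themselves are routine once the scheme has been set up.
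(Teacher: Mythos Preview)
The paper does not actually prove this lemma: it is stated without proof, the univariate negative $\mathcal{R}(p,q)$-binomial distribution and its factorial moments being treated as known input from \cite{HMD}. So there is no ``paper's proof'' to compare against; your proposal is strictly more than what the paper supplies.

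That said, your plan is sound and is exactly the kind of argument one would expect the omitted proof to follow. For the mass function, the decomposition into success/failure factors with common denominator $(1\oplus\alpha)^{n+u}_{\mathcal{R}(p,q)}$ and the reduction to the Gaussian subset-sum identity is the right mechanism; the only delicate point, which you correctly flag, is converting $\genfrac{[}{]}{0pt}{}{n+u-1}{u}_{\phi_2/\phi_1}$ into the $\mathcal{R}(p,q)$-binomial coefficient and verifying the resulting power of $\phi_1$ matches the statement. For the factorial moments, your use of \eqref{015} to pass between $\mathcal{R}(p,q)$ and $\mathcal{R}(p^{-1},q^{-1})$ factorials, followed by the absorption identity
\[
[u]_{m,\mathcal{R}(p,q)}\genfrac{[}{]}{0pt}{}{n+u-1}{u}_{\mathcal{R}(p,q)}=[n+m-1]_{m,\mathcal{R}(p,q)}\genfrac{[}{]}{0pt}{}{n+u-1}{u-m}_{\mathcal{R}(p,q)}
\]
and a shift $v=u-m$, is precisely the technique the paper itself uses later (in the proof of the theorem on the negative trinomial) to handle analogous sums. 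Your caution about the exponent bookkeeping and the need to recognise the reindexed series as a genuine normalization is well placed; that is where any slip would occur, but there is no conceptual gap.
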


We denote by $W_j$ the number of successes of the $j^{th}$ kind until the occurrence of the
nth failure of the second kind, in a sequence of Bernoulli trials with chain-composite
failures. 
Then, the $\mathcal{R}(p,q)$-random vector $\underline{W}=\big(W_1,W_2\big)$ follows the negative $\mathcal{R}(p,q)$- trinomial probability distribution of the first kind, with parameters $n,$ $\underline{\alpha}=\big(\alpha_1,\alpha_2\big),$ $p,$ and, $q.$ Its mass function is presented by:
\begin{small}
	\begin{eqnarray}\label{rpqmfk}
	\qquad P\big(\underline{W}=\underline{w}\big)=\genfrac{[}{]}{0pt}{}{n+w_1+w_2-1}{w_1,w_2}_{\mathcal {R}(p,q)}\frac{\alpha^{w_1}_1\alpha^{w_2}_2\phi^{{n-w_1\choose 2}+{n-w_2\choose 2}}_1\phi^{{w_1\choose 2}+{w_2\choose 2}}_2}{(1 \oplus \alpha_1)^{n+w_1+w_2}_{\mathcal{R}(p,q)}(1 \oplus \alpha_2)^{n+w_2}_{\mathcal{R}(p,q)}},
	\end{eqnarray}
\end{small}
where $w_j\in\mathbb{N}\cup\{0\}, 0<\alpha_j<1$, and $j\in\{1,2\}.$
\begin{theorem}
	For $m_1\in\mathbb{N}\cup\{0\}$ and $m_2\in\mathbb{N}\cup\{0\},$ the $\mathcal{R}(p^{-1},q^{-1})$-factorial moments of the negative $\mathcal{R}(p,q)$-trinomial probability distribution of the first kind, with parameters $n,$ $\underline{\alpha}=\big(\alpha_1,\alpha_2\big),$ $p,$ and, $q$ are given as follows:
	\begin{small}
		\begin{eqnarray}\label{ntfk1}
		E\big([W_2]_{m_2,\mathcal{R}(p^{-1},q^{-1})}\big)=\alpha^{m_2}_2\,[n+m_2-1]_{m_2,\mathcal{R}(p,q)},
		\end{eqnarray}
		\begin{eqnarray}\label{ntfk2}
		 E\big([W_1]_{m_1,\mathcal{R}(p^{-1},q^{-1})}|W_2=w_2\big)=\alpha^{m_1}_1\,[n+w_2+m_1-1]_{m_1,\mathcal{R}(p,q)},
		\end{eqnarray}
		\begin{eqnarray}\label{ntfk3}
		E\bigg(\frac{[W_1]_{m_1,\mathcal{R}(p^{-1},q^{-1})}}{\big(\phi^{n+W_2}_1 \oplus \alpha_2\,\phi^{n+W_2}_2\big)^{m_1}_{\mathcal{R}(p,q)}}\bigg)=\alpha^{m_1}_1\,[n+m_1-1]_{m_1,\mathcal{R}(p,q)},
		\end{eqnarray}
		and 
		\begin{small}
			\begin{equation}\label{ntfk4}
			E\bigg(\frac{[W_1]_{m_1,\mathcal{R}(p^{-1},q^{-1})}\,[W_2]_{m_2,\mathcal{R}(p^{-1},q^{-1})}}{\big(\phi^{n+W_2}_1 \oplus \alpha_2\,\phi^{n+W_2}_2\big)^{m_1}_{\mathcal{R}(p,q)}}\bigg)
			=\frac{[n+m_1+m_2-1]_{m_1+m_2,\mathcal{R}(p,q)}}{\alpha^{-m_1}_1\,\alpha^{-m_2}_2},
			\end{equation}
		\end{small}
	\end{small}
\end{theorem}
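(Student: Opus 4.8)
The plan is to reduce the bivariate statement to the univariate negative $\mathcal{R}(p,q)$-binomial distribution treated in the preceding Lemma, by factoring the joint mass function \eqref{rpqmfk} as the product of a marginal law for $W_2$ and a conditional law for $W_1$ given $W_2=w_2$. First I would note that, directly from \eqref{Rpqbc}, the $\mathcal{R}(p,q)$-multinomial coefficient splits as
\begin{equation*}
\genfrac{[}{]}{0pt}{}{n+w_1+w_2-1}{w_1,w_2}_{\mathcal{R}(p,q)}=\genfrac{[}{]}{0pt}{}{n+w_2-1}{w_2}_{\mathcal{R}(p,q)}\,\genfrac{[}{]}{0pt}{}{n+w_2+w_1-1}{w_1}_{\mathcal{R}(p,q)},
\end{equation*}
since the $\mathcal{R}(p,q)$-factorials telescope. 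Matching this with the split of the shifted-factorial denominators in \eqref{rpqmfk}, I would read off that $W_2$ obeys a negative $\mathcal{R}(p,q)$-binomial distribution of the first kind with parameter $n$, whereas the conditional law of $W_1$ given $W_2=w_2$ is again of this type but with $n$ replaced by $n+w_2$. Granting this factorization, \eqref{ntfk1} and \eqref{ntfk2} are immediate: they are precisely \eqref{ntfkb} applied to the marginal of $W_2$ (parameter $n$) and to the conditional law of $W_1$ (parameter $n+w_2$), respectively.

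For \eqref{ntfk3} I would condition on $W_2$ and use the tower property, inserting \eqref{ntfk2} to obtain
\begin{equation*}
E\bigg(\frac{[W_1]_{m_1,\mathcal{R}(p^{-1},q^{-1})}}{\big(\phi^{n+W_2}_1\oplus\alpha_2\phi^{n+W_2}_2\big)^{m_1}_{\mathcal{R}(p,q)}}\bigg)=\alpha^{m_1}_1\,E\bigg(\frac{[n+W_2+m_1-1]_{m_1,\mathcal{R}(p,q)}}{\big(\phi^{n+W_2}_1\oplus\alpha_2\phi^{n+W_2}_2\big)^{m_1}_{\mathcal{R}(p,q)}}\bigg).
\end{equation*}
The decisive algebraic step is the identity
\begin{equation*}
[n+w_2+m_1-1]_{m_1,\mathcal{R}(p,q)}\,\genfrac{[}{]}{0pt}{}{n+w_2-1}{w_2}_{\mathcal{R}(p,q)}=[n+m_1-1]_{m_1,\mathcal{R}(p,q)}\,\genfrac{[}{]}{0pt}{}{n+m_1+w_2-1}{w_2}_{\mathcal{R}(p,q)},
\end{equation*}
again a direct consequence of \eqref{Rpqbc}. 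Substituting it extracts the $w_2$-free factor $[n+m_1-1]_{m_1,\mathcal{R}(p,q)}$ from the sum, while the shifted-factorial denominator, via the telescoping $(1\oplus\alpha_2)^{n+w_2}_{\mathcal{R}(p,q)}\big(\phi^{n+w_2}_1\oplus\alpha_2\phi^{n+w_2}_2\big)^{m_1}_{\mathcal{R}(p,q)}=(1\oplus\alpha_2)^{n+m_1+w_2}_{\mathcal{R}(p,q)}$, turns the residual summand into the mass of a negative $\mathcal{R}(p,q)$-binomial distribution of the first kind with parameter $n+m_1$, whose total mass is $1$; this yields \eqref{ntfk3}.

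Finally, \eqref{ntfk4} follows the same route, except that I would retain the $W_2$-measurable factor $[W_2]_{m_2,\mathcal{R}(p^{-1},q^{-1})}$ inside the expectation so that it passes through the inner conditioning. After applying \eqref{ntfk2}, the same combinatorial identity, and the same telescoping, the residual sum becomes the $\mathcal{R}(p^{-1},q^{-1})$-factorial moment of order $m_2$ of a negative $\mathcal{R}(p,q)$-binomial distribution with parameter $n+m_1$; by \eqref{ntfkb} (with $n\mapsto n+m_1$) this equals $\alpha^{m_2}_2\,[n+m_1+m_2-1]_{m_2,\mathcal{R}(p,q)}$. The factorial product rule $[n+m_1-1]_{m_1,\mathcal{R}(p,q)}\,[n+m_1+m_2-1]_{m_2,\mathcal{R}(p,q)}=[n+m_1+m_2-1]_{m_1+m_2,\mathcal{R}(p,q)}$ then delivers the stated expression. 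I expect the main obstacle to be the bookkeeping of the $\phi_1$- and $\phi_2$-exponents in the telescoping step: one must check that the powers of $\phi_1,\phi_2$ carried by the mass function \eqref{rpqmfk} and by the shifted-factorial denominator recombine \emph{exactly} into the normalised negative $\mathcal{R}(p,q)$-binomial mass with the shifted parameter $n+m_1$, leaving no residual $w_2$-dependent factor; tracking these exponents (rather than the coefficient and $\alpha$-factors, which match trivially) is where care is required.
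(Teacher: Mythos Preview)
Your proposal is correct and follows essentially the same route as the paper: factor the joint law into the marginal of $W_2$ and the conditional of $W_1$, invoke the univariate Lemma \eqref{ntfka}--\eqref{ntfkb} for \eqref{ntfk1} and \eqref{ntfk2}, and then use the tower property together with the identity $[n+w_2+m_1-1]_{m_1,\mathcal{R}(p,q)}\genfrac{[}{]}{0pt}{}{n+w_2-1}{w_2}_{\mathcal{R}(p,q)}=[n+m_1-1]_{m_1,\mathcal{R}(p,q)}\genfrac{[}{]}{0pt}{}{n+m_1+w_2-1}{w_2}_{\mathcal{R}(p,q)}$ and the telescoping of $(1\oplus\alpha_2)^{\bullet}_{\mathcal{R}(p,q)}$ for \eqref{ntfk3} and \eqref{ntfk4}. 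The only cosmetic difference is in \eqref{ntfk4}: the paper converts $[w_2]_{m_2,\mathcal{R}(p^{-1},q^{-1})}$ to $[w_2]_{m_2,\mathcal{R}(p,q)}$ via \eqref{015} and then uses a three-factor combinatorial identity plus the negative binomial formula directly, whereas you more economically recognise the residual sum as the order-$m_2$ moment of a negative $\mathcal{R}(p,q)$-binomial with shifted parameter $n+m_1$ and reapply \eqref{ntfkb}; both routes are equivalent, and your caveat about tracking the $\phi_1,\phi_2$ exponents is exactly the point that needs care in either version.
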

\begin{proof}
	According to the relations \eqref{ntfka} and \eqref{ntfkb}, we derive the $\mathcal{R}(p^{-1},q^{-1})$- factorial moments of $W_2$ given by \eqref{ntfk1}. Moreover, the conditional distribution of the $\mathcal{R}(p,q)$-random variable $W_1,$ given that $W_2=w_2,$ is a negative $\mathcal{R}(p,q)$-trinomial probability distribution of the first kind, with mass function:
	\begin{small}
		\begin{eqnarray*}
			P\big(W_1=w_1|W_2=w_2\big)=\genfrac{[}{]}{0pt}{}{n+w_1+w_2-1}{w_1}_{\mathcal {R}(p,q)}\frac{\alpha^{w_1}_1\phi^{n-w_1\choose 2}_1\phi^{w_1\choose 2}_2}{(1 \oplus \alpha_1)^{n+w_1+w_2}_{\mathcal{R}(p,q)}},\,m_1\in\mathbb{N}\cup\{0\}.
		\end{eqnarray*}
	\end{small}
	Using, once the relations \eqref{ntfka} and \eqref{ntfkb}, the conditional $\mathcal{R}(p^{-1},q^{-1})$- factorial moments of $W_1,$ given that $W_2=w_2,$ are given by \eqref{ntfk2}. 
	
	Furthermore, the  expected value of the $\mathcal{R}(p^{-1},q^{-1})$-function of $\underline{W}=\big(W_1,W_2\big)$
	\begin{eqnarray*}
		\Gamma:=\frac{[W_1]_{m_1,\mathcal{R}(p^{-1},q^{-1})}}{\big(\phi^{n+W_2}_1 \oplus \alpha_2\,\phi^{n+W_2}_2\big)^{m_1}_{\mathcal{R}(p,q)}},\,\, m_1\in\mathbb{N}\cup\{0\}
	\end{eqnarray*}
	can be calculated  according to the relation
	\begin{eqnarray*}
		E\big(\Gamma\big)&=& E\big(E\big(\Gamma|W_2\big)\big)\nonumber\\
		&=& \alpha^{m_1}_1\,E\bigg(\frac{[n+W_2+m_1-1]_{m_1,\mathcal{R}(p,q)}}{\big(\phi^{n+W_2}_1 \oplus \alpha_2\,\phi^{n+W_2}_2\big)^{m_1}_{\mathcal{R}(p,q)}}\bigg).
	\end{eqnarray*}
	Since
	\begin{eqnarray*}
		E\bigg(\frac{[n+W_2+m_1-1]_{m_1,\mathcal{R}(p,q)}}{\big(\phi^{n+W_2}_1 \oplus \alpha_2\,\phi^{n+W_2}_2\big)^{m_1}_{\mathcal{R}(p,q)}}\bigg)&=&\sum_{w_2}^{\infty}[n+w_2+m_1-1]_{m_1,\mathcal{R}(p,q)}\nonumber\\&\times&\genfrac{[}{]}{0pt}{}{n+w_2-1}{w_2}_{\mathcal {R}(p,q)}\frac{\alpha^{w_2}_2\phi^{n-w_2\choose 2}_1\phi^{w_2\choose 2}_2}{(1 \oplus \alpha_2)^{n+m_1+w_2}_{\mathcal{R}(p,q)}}.
	\end{eqnarray*}
	From the relation
	\begin{small}
		%\begin{eqnarray*}
		$$[n+w_2+m_1-1]_{m_1,\mathcal{R}(p,q)} \genfrac{[}{]}{0pt}{}{n+w_2-1}{w_2}_{\mathcal {R}(p,q)}=[n+m_1-1]_{m_1,\mathcal{R}(p,q)} \genfrac{[}{]}{0pt}{}{n+m_1+w_2-1}{w_2}_{\mathcal {R}(p,q)}$$
		%\end{eqnarray*}
	\end{small}
	and the negative $\mathcal{R}(p,q)$-binomial formula \cite{HMRC}, we have:
	\begin{small}
		\begin{eqnarray*}
			E\bigg(\frac{[n+W_2+m_1-1]_{m_1,\mathcal{R}(p,q)}}{\big(\phi^{n+W_2}_1 \oplus \alpha_2\,\phi^{n+W_2}_2\big)^{m_1}_{\mathcal{R}(p,q)}}\bigg)&=&[n+m_1-1]_{m_1,\mathcal{R}(p,q)}\sum_{w_2=0}^{\infty}\genfrac{[}{]}{0pt}{}{n+m_1+w_2-1}{w_2}_{\mathcal {R}(p,q)}\nonumber\\&\times&\frac{\alpha^{w_2}_2\phi^{n-w_2\choose 2}_1\phi^{w_2\choose 2}_2}{(1 \oplus \alpha_2)^{n+m_1+w_2}_{\mathcal{R}(p,q)}}\nonumber\\
			&=&[n+m_1-1]_{m_1,\mathcal{R}(p,q)} .
		\end{eqnarray*}
	\end{small}
	Thus, the relation \eqref{ntfk3} holds. Analogously, the  expected value of the $\mathcal{R}(p^{-1},q^{-1})$-function of $\underline{W}=\big(W_1,W_2\big)$
	\begin{eqnarray*}
		\Lambda:=\frac{[W_1]_{m_1,\mathcal{R}(p^{-1},q^{-1})}[W_2]_{m_2,\mathcal{R}(p^{-1},q^{-1})}}{\big(\phi^{n+W_2}_1 \oplus \alpha_2\,\phi^{n+W_2}_2\big)^{m_1}_{\mathcal{R}(p,q)}},\,m_1\in\mathbb{N}\cup\{0\},\, m_2\in\mathbb{N}\cup\{0\}
	\end{eqnarray*}
	may be computed  using the relation
	\begin{eqnarray*}
		E\big(\Lambda\big)&=& E\big(E\big(\Lambda|W_2\big)\big)\nonumber\\
		&=& \alpha^{m_1}_1\,E\bigg(\frac{[W_2]_{m_2,\mathcal{R}(p^{-1},q^{-1})}[n+W_2+m_1-1]_{m_1,\mathcal{R}(p,q)}}{\big(\phi^{n+W_2}_1 \oplus \alpha_2\,\phi^{n+W_2}_2\big)^{m_1}_{\mathcal{R}(p,q)}}\bigg).
	\end{eqnarray*}
	Since
	\begin{small}
		\begin{eqnarray*}
			E\bigg(\frac{[W_2]_{m_2,\mathcal{R}(p^{-1},q^{-1})}[n+W_2+m_1-1]_{m_1,\mathcal{R}(p,q)}}{\big(\phi^{n+W_2}_1 \oplus \alpha_2\,\phi^{n+W_2}_2\big)^{m_1}_{\mathcal{R}(p,q)}}\bigg)&=&\sum_{w_2=m_2}^{\infty}\frac{[w_2]_{m_2,\mathcal{R}(p^{-1},q^{-1})}[n+w_2+m_1-1]_{m_1,\mathcal{R}(p,q)}}{(1 \oplus \alpha_2)^{m_1}_{\mathcal{R}(p,q)}}\nonumber\\&\times&\genfrac{[}{]}{0pt}{}{n+w_2-1}{w_2}_{\mathcal {R}(p,q)}\frac{\alpha^{w_2}_2\phi^{n-w_2\choose 2}_1\phi^{w_2\choose 2}_2}{(1 \oplus \alpha_2)^{n+w_2}_{\mathcal{R}(p,q)}}.
		\end{eqnarray*}
	\end{small}
	Using the relations
	\begin{eqnarray*}
		[w_2]_{m_2,\mathcal{R}(p^{-1},q^{-1})}=\big(\phi_1\phi_2\big)^{-w_2\,m_2+{m_2+1\choose 2}}\,[w_2]_{m_2,\mathcal{R}(p,q)},
	\end{eqnarray*}
	\begin{eqnarray*}
		{w_2 \choose 2} + {m_2 +1\choose 2}-w_2\,m_2 = {w_2-m_2\choose 2}
	\end{eqnarray*}
	and
	\begin{eqnarray*}
		[w_2]_{m_2,\mathcal{R}(p,q)}[n+w_2+m_1-1]_{m_1,\mathcal{R}(p,q)} \genfrac{[}{]}{0pt}{}{n+w_2-1}{w_2}_{\mathcal {R}(p,q)}&=&[n+m_1+m_2-1]_{m_1+m_2,\mathcal{R}(p,q)}\nonumber\\&\times& \genfrac{[}{]}{0pt}{}{n+m_1+w_2-1}{w_2-m_2}_{\mathcal {R}(p,q)},
	\end{eqnarray*}
	together with the negative $\mathcal{R}(p,q)$-binomial formula \cite{HMRC}, we get
	\begin{small}
		\begin{eqnarray*}
			E\bigg(\frac{[W_2]_{m_2,\mathcal{R}(p^{-1},q^{-1})}[n+W_2+m_1-1]_{m_1,\mathcal{R}(p,q)}}{\big(\phi^{n+W_2}_1 \oplus \alpha_2\,\phi^{n+W_2}_2\big)^{m_1}_{\mathcal{R}(p,q)}}\bigg)=[n+m_2+m_1-1]_{m_1+m_2,\mathcal{R}(p,q)}\,\alpha^{m_2}_2
		\end{eqnarray*}
	\end{small}
	and the proof is achieved.
\end{proof}
\begin{corollary}
	 The $\mathcal{R}(p,q)$- covariance of the $\mathcal{R}(p,q)$- random variables  $\widehat{W}:=(\phi^{n+W_2}_1 \oplus \alpha_2\,\phi^{n+W_2}_2)^{-1}[W_1]_{m_1,\mathcal{R}(p^{-1},q^{-1})}$ and $\overline{W}:=[W_2]_{m_2,\mathcal{R}(p^{-1},q^{-1})}$ is determined by:
	\begin{eqnarray*}
		Cov\big(\widehat{W},\overline{W}\big)=[n]_{\mathcal{R}(p,q)}\alpha_1\alpha_2\big([n+1]_{\mathcal{R}(p,q)}-[n]_{\mathcal{R}(p,q)}\big),
	\end{eqnarray*}
where the $\mathcal{R}(p,q)$-random vector $\underline{W}=\big(W_1,W_2\big)$    follows the negative $\mathcal{R}(p,q)$- trinomial probability distribution of the first kind, with parameters $n,$ $\underline{\alpha}=\big(\alpha_1,\alpha_2\big),$ $p,$ and, $q.$
	\begin{proof}
		By definition, the $\mathcal{R}(p,q)$- covariance of $\widehat{W}$ and $\overline{W}$ is given by
		\begin{eqnarray*}
			Cov\big(\widehat{W},\overline{W}\big)=E\bigg(\widehat{W}\,\overline{W}\bigg)- E\big(\widehat{W}\big)\,E\big(\overline{W}\big).
		\end{eqnarray*}
		Taking $m_1=m_2=1,$ in the relations \eqref{ntfk1},\eqref{ntfk3}, and \eqref{ntfk4}, the result follows. 
	\end{proof}
\end{corollary}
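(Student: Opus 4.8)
The plan is to reduce the corollary to a direct substitution into the factorial-moment formulas of the preceding theorem, since the covariance of two random variables is determined entirely by their first and joint moments. By definition, $Cov(\widehat{W},\overline{W})=E(\widehat{W}\,\overline{W})-E(\widehat{W})\,E(\overline{W})$, so it suffices to evaluate the three expectations $E(\widehat{W})$, $E(\overline{W})$, and $E(\widehat{W}\,\overline{W})$ and then combine them.

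First I would observe that the variables $\widehat{W}$ and $\overline{W}$ are built precisely so that their expectations are the $m_1=1$ and $m_2=1$ specializations of the formulas already established. Indeed, the definition $\widehat{W}=(\phi^{n+W_2}_1 \oplus \alpha_2\,\phi^{n+W_2}_2)^{-1}[W_1]_{m_1,\mathcal{R}(p^{-1},q^{-1})}$ coincides, at $m_1=1$, with the argument whose expectation is given by \eqref{ntfk3}; hence $E(\widehat{W})=\alpha_1\,[n]_{\mathcal{R}(p,q)}$. Likewise, setting $m_2=1$ in \eqref{ntfk1} gives $E(\overline{W})=\alpha_2\,[n]_{\mathcal{R}(p,q)}$. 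For the joint moment I would read off \eqref{ntfk4} at $m_1=m_2=1$, which yields $E(\widehat{W}\,\overline{W})=\alpha_1\alpha_2\,[n+1]_{2,\mathcal{R}(p,q)}$.

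The one genuine simplification is to unfold the factorial of order two. Using the definition $[u]_{r,\mathcal{R}(p,q)}=\prod_{i=1}^{r}[u-i+1]_{\mathcal{R}(p,q)}$ with $u=n+1$ and $r=2$, I obtain $[n+1]_{2,\mathcal{R}(p,q)}=[n+1]_{\mathcal{R}(p,q)}\,[n]_{\mathcal{R}(p,q)}$, so that $E(\widehat{W}\,\overline{W})=\alpha_1\alpha_2\,[n+1]_{\mathcal{R}(p,q)}\,[n]_{\mathcal{R}(p,q)}$. Substituting the three values into the covariance formula and factoring out the common term $\alpha_1\alpha_2\,[n]_{\mathcal{R}(p,q)}$ then produces the asserted expression $\alpha_1\alpha_2\,[n]_{\mathcal{R}(p,q)}\big([n+1]_{\mathcal{R}(p,q)}-[n]_{\mathcal{R}(p,q)}\big)$.

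There is no real analytic obstacle here; the argument is essentially a bookkeeping exercise once the theorem is in hand. The only subtlety worth flagging is the alignment of exponents: the denominator in the definition of $\widehat{W}$ carries the fixed power $-1$ rather than $-m_1$, so $\widehat{W}$ matches the bracketed quantities of \eqref{ntfk3} and \eqref{ntfk4} only when $m_1=1$. This is precisely why the corollary is confined to the first-order case $m_1=m_2=1$, and why a more general covariance identity does not follow immediately from these moment formulas.
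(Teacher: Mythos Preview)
Your proof is correct and follows essentially the same route as the paper: invoke the covariance definition and specialize \eqref{ntfk1}, \eqref{ntfk3}, and \eqref{ntfk4} at $m_1=m_2=1$. You simply supply more of the arithmetic (the unfolding of $[n+1]_{2,\mathcal{R}(p,q)}$ and the final factoring) that the paper leaves implicit.
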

\subsection{$\mathcal{R}(p,q)$-trinomial  distribution of the second kind}
The probability function of the $\mathcal{R}(p,q)$-random vector $\underline{X}=\big(X_1,X_2\big)$ of the $\mathcal{R}(p,q)$- trinomial probability distribution of the second kind, with parameters $n,$ $\underline{\beta}=\big(\beta_1,\beta_2\big),$ $p,$ and, $q$ is given by:
\begin{small}
	\begin{eqnarray}
	P\big(\underline{X}=\underline{x}\big)=\genfrac{[}{]}{0pt}{}{n}{x_1,x_2}_{\mathcal {R}(p,q)}\,\beta^{x_1}_1\,\beta^{x_2}_2(1 \ominus \beta_1)^{n-x_1}_{\mathcal{R}(p,q)}(1 \ominus \beta_2)^{n-x_1-x_2}_{\mathcal{R}(p,q)},
	\end{eqnarray}
\end{small}
where $x_j\in\{0,1,\cdots,n\}, x_1+x_2\leq n, s_j=\displaystyle\sum_{i=1}^{j}x_j, 0<\beta_j<1$, and $j\in\{1,2\}.$
\begin{theorem}
	The $\mathcal{R}(p,q)$-factorial moments of the  $\mathcal{R}(p,q)$-trinomial probability distribution of the second kind, with parameters $n,$ $\underline{\beta}=\big(\beta_1,\beta_2\big),$ $p,$ and, $q,$ are given by:
	\begin{eqnarray}\label{tsk2}
	E\big([X_1]_{m_1,\mathcal{R}(p,q)}\big)=\beta^{m_1}_1\,[n]_{m_1,\mathcal{R}(p,q)}, m_1\in\{0,1,\cdots,n\},
	\end{eqnarray}
	\begin{eqnarray}\label{tsk3}
	E\big([X_2]_{m_2,\mathcal{R}(p,q)}|X_1=x_1\big)=\beta^{m_2}_2\,[n-x_1]_{m_2,\mathcal{R}(p,q)},
	\end{eqnarray}
	where $m_2\in\{0,1,\cdots,n-x_1\},$
	\begin{eqnarray}\label{tsk4}
	E\bigg(\frac{[X_2]_{m_2,\mathcal{R}(p,q)}}{\big(\phi^{n-m_2-X_1}_1 \ominus \beta_1\,\phi^{n-m_2-X_1}_2\big)^{m_2}_{\mathcal{R}(p,q)}}\bigg)=\beta^{m_2}_2\,[n]_{m_2,\mathcal{R}(p,q)}, 
	\end{eqnarray}
	with $ m_2\in\{0,1,\cdots,n\},$
	and
	\begin{small}
	\begin{eqnarray}\label{tsk5}
	E\bigg(\frac{[X_1]_{m_1,\mathcal{R}(p,q)}[X_2]_{m_2,\mathcal{R}(p,q)}}{\big(\phi^{n-m_2-X_1}_1 \ominus \beta_1\,\phi^{n-m_2-X_1}_2\big)^{m_2}_{\mathcal{R}(p,q)}}\bigg)=\beta^{m_1}_1\,\beta^{m_2}_2\,[n]_{m_1+m_2,\mathcal{R}(p,q)},
	\end{eqnarray}
	where $m_1\in\{0,1,\cdots,n-m_2\}$ and $m_2\in\{0,1,\cdots,n\}.$
	\end{small}
\end{theorem}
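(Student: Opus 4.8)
The plan is to mirror the structure of the proof given above for the first-kind trinomial distribution, exploiting the fact that both the marginal and the conditional laws reduce to univariate $\mathcal{R}(p,q)$-binomial distributions of the second kind, whose factorial moments are already known from \cite{HMD}.

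First I would recover the marginal law of $X_1$ by summing $P(\underline{X}=\underline{x})$ over $x_2$ from $0$ to $n-x_1$, factoring the multinomial coefficient as $\genfrac{[}{]}{0pt}{}{n}{x_1,x_2}_{\mathcal {R}(p,q)}=\genfrac{[}{]}{0pt}{}{n}{x_1}_{\mathcal {R}(p,q)}\genfrac{[}{]}{0pt}{}{n-x_1}{x_2}_{\mathcal {R}(p,q)}$ and collapsing the residual $\beta_2$-sum against $(1 \ominus \beta_2)^{n-x_1-x_2}_{\mathcal{R}(p,q)}$ with the second-kind $\mathcal{R}(p,q)$-binomial formula \cite{HMRC}. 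This should yield
$$P(X_1=x_1)=\genfrac{[}{]}{0pt}{}{n}{x_1}_{\mathcal {R}(p,q)}\,\beta^{x_1}_1\,(1 \ominus \beta_1)^{n-x_1}_{\mathcal{R}(p,q)},$$
so that $X_1$ follows the $\mathcal{R}(p,q)$-binomial distribution of the second kind and \eqref{tsk2} is immediate from \cite{HMD}. Dividing the joint mass function by $P(X_1=x_1)$ identifies the conditional law of $X_2$ given $X_1=x_1$ as the second-kind $\mathcal{R}(p,q)$-binomial distribution with parameters $n-x_1$ and $\beta_2$, whence \eqref{tsk3} is again a direct application of \cite{HMD}.

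For \eqref{tsk4} I would use the tower property $E(\Theta)=E\big(E(\Theta\mid X_1)\big)$. Since the denominator $\big(\phi^{n-m_2-X_1}_1 \ominus \beta_1\phi^{n-m_2-X_1}_2\big)^{m_2}_{\mathcal{R}(p,q)}$ depends only on $X_1$, it factors out of the inner conditional expectation, and applying \eqref{tsk3} to the surviving $[X_2]_{m_2,\mathcal{R}(p,q)}$ reduces the problem to evaluating
$$\beta^{m_2}_2\,E\bigg(\frac{[n-X_1]_{m_2,\mathcal{R}(p,q)}}{\big(\phi^{n-m_2-X_1}_1 \ominus \beta_1\phi^{n-m_2-X_1}_2\big)^{m_2}_{\mathcal{R}(p,q)}}\bigg).$$
Expanding this expectation over the marginal of $X_1$, I would invoke the index-shift identity $[n-x_1]_{m_2,\mathcal{R}(p,q)}\genfrac{[}{]}{0pt}{}{n}{x_1}_{\mathcal {R}(p,q)}=[n]_{m_2,\mathcal{R}(p,q)}\genfrac{[}{]}{0pt}{}{n-m_2}{x_1}_{\mathcal {R}(p,q)}$ exactly as in the first-kind proof, and then resum the resulting series with the second-kind $\mathcal{R}(p,q)$-binomial formula. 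The $\ominus$-factor in the denominator combines with the $(1 \ominus \beta_1)$-factors produced by the binomial collapse, leaving precisely $\beta^{m_2}_2\,[n]_{m_2,\mathcal{R}(p,q)}$.

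Finally, for the joint moment \eqref{tsk5} I would again condition on $X_1$, use \eqref{tsk3} to dispatch the $[X_2]_{m_2,\mathcal{R}(p,q)}$ factor, and reduce to an expectation of $[X_1]_{m_1,\mathcal{R}(p,q)}[n-X_1]_{m_2,\mathcal{R}(p,q)}$ against the marginal of $X_1$, weighted by the same $\ominus$-denominator. Here the two-parameter shift $[x_1]_{m_1,\mathcal{R}(p,q)}[n-x_1]_{m_2,\mathcal{R}(p,q)}\genfrac{[}{]}{0pt}{}{n}{x_1}_{\mathcal {R}(p,q)}=[n]_{m_1+m_2,\mathcal{R}(p,q)}\genfrac{[}{]}{0pt}{}{n-m_1-m_2}{x_1-m_1}_{\mathcal {R}(p,q)}$ plays the role of the single-index identity, after which a final application of the second-kind binomial formula yields $\beta^{m_1}_1\beta^{m_2}_2[n]_{m_1+m_2,\mathcal{R}(p,q)}$. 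The main obstacle I anticipate is the bookkeeping of the $\phi_1$- and $\phi_2$-exponents through the $\ominus$-shifted factorials: one must verify that the shift $n\mapsto n-m_2$ built into the denominator matches the exponents generated by the binomial theorem so that everything telescopes cleanly, precisely as the analogous $\oplus$-factor did in the first-kind computation.
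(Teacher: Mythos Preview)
Your proposal is correct and follows essentially the same route as the paper: identify the marginal and conditional laws as second-kind $\mathcal{R}(p,q)$-binomials to obtain \eqref{tsk2}--\eqref{tsk3} from \cite{HMD}, then use the tower property together with the index-shift identities and the $\mathcal{R}(p,q)$-binomial formula for \eqref{tsk4}--\eqref{tsk5}. The ``bookkeeping'' you anticipate is handled in the paper by the explicit splitting $(1 \ominus \beta_1)^{n-x_1}_{\mathcal{R}(p,q)}=(1 \ominus \beta_1)^{n-m_2-x_1}_{\mathcal{R}(p,q)}\big(\phi^{n-m_2-x_1}_1 \ominus \beta_1\,\phi^{n-m_2-x_1}_2\big)^{m_2}_{\mathcal{R}(p,q)}$, which is exactly the telescoping you describe.
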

\begin{proof}
	The $\mathcal{R}(p,q)$- random variable $X_1$ obey the $\mathcal{R}(p,q)$- binomial probability distribution of the second kind,  with mass function:
	\begin{eqnarray*}
		P\big(X_1=x_1\big)=\genfrac{[}{]}{0pt}{}{n}{x_1}_{\mathcal {R}(p,q)}\,\beta^{x_1}_1\,(1 \ominus \beta_1)^{n-x_1}_{\mathcal{R}(p,q)},\,x_1\in\{0,1,\cdots,n\}.
	\end{eqnarray*}
	Thus, using \cite{HMD}, the relation \eqref{tsk2} follows. Furthermore, the conditional distribution of the $\mathcal{R}(p,q)$- random variable $X_2,$ given that $X_1=x_1,$ is a  $\mathcal{R}(p,q)$-binomial probability distribution of the second kind, with mass density 
	\begin{eqnarray*}
		P\big(X_2=x_2|X_1=x_1\big)=\genfrac{[}{]}{0pt}{}{n-x_1}{x_2}_{\mathcal {R}(p,q)}\,\beta^{x_2}_2\,(1 \ominus \beta_2)^{n-x_1-x_2}_{\mathcal{R}(p,q)}.
	\end{eqnarray*}
	According again to \cite{HMD}, the conditional $\mathcal{R}(p,q)$- factorial moments of $X_2,$
	given that $X_1=x_1,$ are furnished by \eqref{tsk3}. The expected value of the $\mathcal{R}(p,q)$-function of $\underline{X}=\big(X_1,X_2\big)$
	\begin{eqnarray*}
		\Delta:=\frac{[X_2]_{m_2,\mathcal{R}(p,q)}}{\big(\phi^{n-m_2-X_1}_1 \ominus \beta_1\,\phi^{n-m_2-X_1}_2\big)^{m_2}_{\mathcal{R}(p,q)}},\,\, m_2\in\{0,1,\cdots,n\}
	\end{eqnarray*}
	can be computed according to the relation
	\begin{eqnarray*}
		E(\Delta)&=& E\big[E(\Delta|X_1)\big]\nonumber\\&=&\beta^{m_2}_2\,E\bigg[\frac{[n-X_1]_{m_2,\mathcal{R}(p,q)}}{\big(\phi^{n-m_2-X_1}_1 \ominus \beta_1\,\phi^{n-m_2-X_1}_2\big)^{m_2}_{\mathcal{R}(p,q)}}\bigg].
	\end{eqnarray*}
	Since
	\begin{small}
		\begin{eqnarray*}
			E\bigg[\frac{[n-X_1]_{m_2,\mathcal{R}(p,q)}}{\big(\phi^{n-m_2-X_1}_1 \ominus \beta_1\,\phi^{n-m_2-X_1}_2\big)^{m_2}_{\mathcal{R}(p,q)}}\bigg]&=&\sum_{x_1=0}^{n-m_2}[n-x_1]_{m_2,\mathcal{R}(p,q)}\genfrac{[}{]}{0pt}{}{n}{x_1}_{\mathcal {R}(p,q)}\nonumber\\&\times&\frac{\beta^{x_1}_1\,(1 \ominus \beta_1)^{n-x_1}_{\mathcal{R}(p,q)}}{\big(\phi^{n-m_2-x_1}_1 \ominus \beta_1\,\phi^{n-m_2-x_1}_2\big)^{m_2}_{\mathcal{R}(p,q)}}.
		\end{eqnarray*}
	\end{small}
	Using the relations:
	\begin{small}
	\begin{eqnarray*}
		[n-x_1]_{m_2,\mathcal{R}(p,q)}\genfrac{[}{]}{0pt}{}{n}{x_1}_{\mathcal {R}(p,q)}=[n]_{m_2,\mathcal{R}(p,q)}\genfrac{[}{]}{0pt}{}{n-m_2}{x_1}_{\mathcal {R}(p,q)},
	\end{eqnarray*}
	\begin{eqnarray}\label{tsk6}
	(1 \ominus \beta_1)^{n-x_1}_{\mathcal{R}(p,q)}=(1 \ominus \beta_1)^{n-m_2-x_1}_{\mathcal{R}(p,q)}\big(\phi^{n-m_2-x_1}_1 \ominus \beta_1\,\phi^{n-m_2-x_1}_2\big)^{m_2}_{\mathcal{R}(p,q)},
	\end{eqnarray}
	\end{small}
	and the $\mathcal{R}(p,q)$-binomial formula \cite{HMRC}, we have:
	\begin{eqnarray*}
		E\bigg[\frac{[n-X_1]_{m_2,\mathcal{R}(p,q)}}{\big(\phi^{n-m_2-X_1}_1 \ominus \beta_1\,\phi^{n-m_2-X_1}_2\big)^{m_2}_{\mathcal{R}(p,q)}}\bigg]&=&[n]_{m_2,\mathcal{R}(p,q)}\sum_{x_1=0}^{n-m_2}\genfrac{[}{]}{0pt}{}{n-m_2}{x_1}_{\mathcal {R}(p,q)}\nonumber\\&\times&\beta^{m_1}_1(1 \ominus \beta_1)^{n-m_2-x_1}_{\mathcal{R}(p,q)}.
	\end{eqnarray*}
	Thus, 
	\begin{eqnarray*}
		E\bigg(\frac{[X_2]_{m_2,\mathcal{R}(p,q)}}{\big(\phi^{n-m_2-X_1}_1 \ominus \beta_1\,\phi^{n-m_2-X_1}_2\big)^{m_2}_{\mathcal{R}(p,q)}}\bigg)=\beta^{m_2}_2\,[n]_{m_2,\mathcal{R}(p,q)}.
	\end{eqnarray*}
	Similarly, the expected value of the $\mathcal{R}(p,q)$-function of $\underline{X}=\big(X_1,X_2\big)$
	\begin{eqnarray*}
		\nabla:=\frac{[X_1]_{m_1,\mathcal{R}(p,q)}[X_2]_{m_2,\mathcal{R}(p,q)}}{\big(\phi^{n-m_2-X_1}_1 \ominus \beta_1\,\phi^{n-m_2-X_1}_2\big)^{m_2}_{\mathcal{R}(p,q)}},\,m_1\in\{0,1,\cdots,n-m_2\},\, m_2\in\{0,1,\cdots,n\}
	\end{eqnarray*}
	may be calculated using the relation:
	\begin{eqnarray*}
		E(\nabla)&=& E\big[E(\nabla|X_1)\big]\nonumber\\&=&\beta^{m_2}_2\,E\bigg[\frac{[X_1]_{m_1,\mathcal{R}(p,q)}[n-X_1]_{m_2,\mathcal{R}(p,q)}}{\big(\phi^{n-m_2-X_1}_1 \ominus \beta_1\,\phi^{n-m_2-X_1}_2\big)^{m_2}_{\mathcal{R}(p,q)}}\bigg].
	\end{eqnarray*}
	Since
	\begin{small}
		\begin{eqnarray*}
			E\bigg[\frac{[X_1]_{m_1,\mathcal{R}(p,q)}[n-X_1]_{m_2,\mathcal{R}(p,q)}}{\big(\phi^{n-m_2-X_1}_1 \ominus \beta_1\,\phi^{n-m_2-X_1}_2\big)^{m_2}_{\mathcal{R}(p,q)}}\bigg]&=&\sum_{x_1=0}^{n-m_2}[x_1]_{m_1,\mathcal{R}(p,q)}[n-x_1]_{m_2,\mathcal{R}(p,q)}\genfrac{[}{]}{0pt}{}{n}{x_1}_{\mathcal {R}(p,q)}\nonumber\\&\times&\frac{\beta^{x_1}_1\,(1 \ominus \beta_1)^{n-x_1}_{\mathcal{R}(p,q)}}{\big(\phi^{n-m_2-x_1}_1 \ominus \beta_1\,\phi^{n-m_2-x_1}_2\big)^{m_2}_{\mathcal{R}(p,q)}}.
		\end{eqnarray*}
	\end{small}
	From  the $\mathcal{R}(p,q)$-binomial formula \cite{HMRC}, the relations \eqref{tsk6}, and 
	\begin{eqnarray*}
		[x_1]_{m_1,\mathcal{R}(p,q)}[n-x_1]_{m_2,\mathcal{R}(p,q)}\genfrac{[}{]}{0pt}{}{n}{x_1}_{\mathcal {R}(p,q)}=[n]_{m_1+m_2,\mathcal{R}(p,q)}\genfrac{[}{]}{0pt}{}{n-m_1-m_2}{x_1-m_1}_{\mathcal {R}(p,q)},
	\end{eqnarray*}
	we obtain:
	\begin{eqnarray*}
		E\bigg[\frac{[X_1]_{m_1,\mathcal{R}(p,q)}[n-X_1]_{m_2,\mathcal{R}(p,q)}}{\big(\phi^{n-m_2-X_1}_1 \ominus \beta_1\,\phi^{n-m_2-X_1}_2\big)^{m_2}_{\mathcal{R}(p,q)}}\bigg]=[n]_{m_1+m_2,\mathcal{R}(p,q)}\beta^{m_1}_1
	\end{eqnarray*}
	and the realtion \eqref{tsk5} follows. 
\end{proof}
\begin{corollary}
	 The $\mathcal{R}(p,q)$- covariance of the functions $\widehat{X}:=[X_1]_{\mathcal{R}(p,q)}$ and  $\overline{X}:=\big(\phi^{n-1-X_1}_1 - \beta_1\phi^{n-1-X_1}_2\big)^{-1}[X_2]_{\mathcal{R}(p,q)}$    is given by:
	\begin{small}
		\begin{eqnarray*}\label{tsk7}
			Cov\big(\widehat{X}, \overline{X}\big)=[n]_{\mathcal{R}(p,q)}\beta_1\beta_2\big([n-1]_{\mathcal{R}(p,q)}-[n]_{\mathcal{R}(p,q)}\big),
		\end{eqnarray*}
		with  $\underline{X}=\big(X_1,X_2\big)$  a  $\mathcal{R}(p,q)$-random vector  satisfying the $\mathcal{R}(p,q)$- trinomial probability distribution of the second kind, with parameters $n,$ $\underline{\beta}=\big(\beta_1,\beta_2\big),$ $p,$ and, $q.$
	\end{small} 
\end{corollary}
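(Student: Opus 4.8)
The plan is to follow the same scheme used for the two preceding covariance corollaries: expand the covariance through its definition and then read off the three required expectations from the factorial-moment formulae of the governing theorem, specialised to $m_1=m_2=1$. First I would write
\begin{eqnarray*}
Cov\big(\widehat{X},\overline{X}\big)=E\big(\widehat{X}\,\overline{X}\big)-E\big(\widehat{X}\big)\,E\big(\overline{X}\big).
\end{eqnarray*}

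The key observation is that with exponent $m_2=1$ the deformed factor $\big(\phi^{n-1-X_1}_1 \ominus \beta_1\phi^{n-1-X_1}_2\big)^{1}_{\mathcal{R}(p,q)}$ reduces to the single binomial $\phi^{n-1-X_1}_1 - \beta_1\phi^{n-1-X_1}_2$, so that $\overline{X}$ is exactly the random variable appearing inside the expectation in \eqref{tsk4}, while $\widehat{X}\,\overline{X}$ is exactly the argument of the expectation in \eqref{tsk5}. This alignment is the only point that genuinely needs checking; everything after it is substitution.

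With this identification, setting $m_1=1$ in \eqref{tsk2} gives $E\big(\widehat{X}\big)=\beta_1\,[n]_{\mathcal{R}(p,q)}$; setting $m_2=1$ in \eqref{tsk4} gives $E\big(\overline{X}\big)=\beta_2\,[n]_{\mathcal{R}(p,q)}$; and setting $m_1=m_2=1$ in \eqref{tsk5} gives $E\big(\widehat{X}\,\overline{X}\big)=\beta_1\beta_2\,[n]_{2,\mathcal{R}(p,q)}$. I would then invoke the defining product $[n]_{2,\mathcal{R}(p,q)}=[n]_{\mathcal{R}(p,q)}[n-1]_{\mathcal{R}(p,q)}$ coming from the order-$r$ factorial, so that
\begin{eqnarray*}
Cov\big(\widehat{X},\overline{X}\big)=\beta_1\beta_2\,[n]_{\mathcal{R}(p,q)}[n-1]_{\mathcal{R}(p,q)}-\beta_1\beta_2\,[n]^2_{\mathcal{R}(p,q)},
\end{eqnarray*}
and factoring out $\beta_1\beta_2\,[n]_{\mathcal{R}(p,q)}$ yields the stated formula.

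Since all three expectations are supplied verbatim by the theorem, no genuine obstacle arises; the proof is bookkeeping. The one place to stay careful is the reduction of the $\ominus$-bracket with unit exponent and the matching of the index ranges: here $m_1=m_2=1$ must satisfy $m_1\in\{0,1,\cdots,n-m_2\}$ and $m_2\in\{0,1,\cdots,n\}$, which forces $n\ge 2$, consistent with the appearance of $[n-1]_{\mathcal{R}(p,q)}$ in the answer. After this check the result follows at once.
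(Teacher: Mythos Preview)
Your proposal is correct and follows exactly the approach of the paper: apply the definition of covariance and specialise relations \eqref{tsk2}, \eqref{tsk4}, and \eqref{tsk5} at $m_1=m_2=1$. In fact you supply more detail than the paper's own proof, which merely states that those substitutions yield the result.
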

\begin{proof}
	Taking $m_1=m_2=1$ in the relations \eqref{tsk2}, \eqref{tsk4}, and \eqref{tsk5}, we compute $E\big(\widehat{X}\, \overline{X}\big)$ and $E\big(\overline{X}\big)E\big(\widehat{X}\big).$
	Therefore, the proof is achieved.
\end{proof}
\subsection{Negative $\mathcal{R}(p,q)$-trinomial  distribution of the second kind}
Let $T_n$ be the number of failures until the occurrence of the $n^{th}$
success, in a sequence of independent geometric sequences of trials. Then, the  distribution of
the $\mathcal{R}(p,q)$- random variable $T_n$ is called negative $\mathcal{R}(p,q)$-binomial distribution of the second kind,
with parameters $n,$ $\beta,$ $p,$ and, $q.$
\begin{lemma}
	The probability function of the negative  $\mathcal{R}(p,q)$- binomial probability distribution of the second kind,  with parameters $n,$ $\beta,$ $p,$ and, $q,$ is given by :
	\begin{eqnarray}\label{ntska}
	P\big(T=t\big)=\genfrac{[}{]}{0pt}{}{n+t-1}{t}_{\mathcal {R}(p,q)}\,\beta^{t}\,(1 \ominus \beta)^{n}_{\mathcal{R}(p,q)},\,t\in\mathbb{N}\cup\{0\}.
	\end{eqnarray}
	Furthermore, its $\mathcal{R}(p,q)$-factorial moments are presented as follows:
	\begin{eqnarray}\label{ntskb}
	E\big([T]_{m,\mathcal{R}(p,q)}\big)=\frac{[n+m-1]_{m,\mathcal{R}(p,q)}\,\beta^{m}}{\big(\phi^{n}_1 \ominus \beta_2\,\phi^{n}_2\big)^{m}_{\mathcal{R}(p,q)}},\,m\in\mathbb{N}.
	\end{eqnarray}
\end{lemma}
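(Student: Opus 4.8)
The plan is to handle the two assertions in turn, taking the mass function \eqref{ntska} first and the factorial moments \eqref{ntskb} second; the second part uses the same two devices (an absorption identity for the $\mathcal{R}(p,q)$-binomial coefficient and the negative $\mathcal{R}(p,q)$-binomial formula) that already appear in the proofs above. For the mass function I would model $T_n$ as the number of failures accumulated before the $n$th success in the prescribed sequence of independent geometric sequences of trials, exactly as in the second-kind construction underlying the $\mathcal{R}(p,q)$-binomial distribution of the second kind recalled earlier. The per-trial success probabilities of the second kind contribute the weight $\beta^{t}(1\ominus\beta)^{n}_{\mathcal{R}(p,q)}$, while the $\mathcal{R}(p,q)$-binomial coefficient $\genfrac{[}{]}{0pt}{}{n+t-1}{t}_{\mathcal{R}(p,q)}$ counts the admissible interleavings of the $t$ failures among the $n$ success-blocks, yielding \eqref{ntska}. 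To confirm that \eqref{ntska} is a genuine distribution I would verify $\sum_{t\ge 0}P(T=t)=1$, which is precisely the negative $\mathcal{R}(p,q)$-binomial formula of the second kind \cite{HMRC}, i.e. $\sum_{t=0}^{\infty}\genfrac{[}{]}{0pt}{}{n+t-1}{t}_{\mathcal{R}(p,q)}\beta^{t}=\big((1\ominus\beta)^{n}_{\mathcal{R}(p,q)}\big)^{-1}$.

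For the factorial moments I would start from
\[
E\big([T]_{m,\mathcal{R}(p,q)}\big)=\sum_{t=m}^{\infty}[t]_{m,\mathcal{R}(p,q)}\,\genfrac{[}{]}{0pt}{}{n+t-1}{t}_{\mathcal{R}(p,q)}\,\beta^{t}\,(1\ominus\beta)^{n}_{\mathcal{R}(p,q)}
\]
(the sum starting at $t=m$ since $[t]_{m,\mathcal{R}(p,q)}=0$ for $t<m$, because $[0]_{\mathcal{R}(p,q)}=\mathcal{R}(1,1)=0$), and apply the absorption identity
\[
[t]_{m,\mathcal{R}(p,q)}\,\genfrac{[}{]}{0pt}{}{n+t-1}{t}_{\mathcal{R}(p,q)}=[n+m-1]_{m,\mathcal{R}(p,q)}\,\genfrac{[}{]}{0pt}{}{n+t-1}{t-m}_{\mathcal{R}(p,q)},
\]
which follows at once from the factorial and binomial definitions \eqref{Rpqf}--\eqref{Rpqbc} after cancelling the common factorials. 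Factoring out the constants and reindexing with $k=t-m$ turns the remaining sum into $\sum_{k\ge 0}\genfrac{[}{]}{0pt}{}{n+m+k-1}{k}_{\mathcal{R}(p,q)}\beta^{k}$, which is again the negative $\mathcal{R}(p,q)$-binomial formula, now with parameter $n+m$, and equals $\big((1\ominus\beta)^{n+m}_{\mathcal{R}(p,q)}\big)^{-1}$. This leaves $E\big([T]_{m,\mathcal{R}(p,q)}\big)=[n+m-1]_{m,\mathcal{R}(p,q)}\,\beta^{m}\,(1\ominus\beta)^{n}_{\mathcal{R}(p,q)}\big/(1\ominus\beta)^{n+m}_{\mathcal{R}(p,q)}$.

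The last and most delicate step is to simplify this ratio of shifted factorials. Using the defining product $(u\ominus v)^{m}_{\mathcal{R}(p,q)}=\prod_{i=1}^{m}\big(u\,\phi_1^{i-1}-v\,\phi_2^{i-1}\big)$, I would write $(1\ominus\beta)^{n+m}_{\mathcal{R}(p,q)}=\prod_{i=1}^{n+m}\big(\phi_1^{i-1}-\beta\,\phi_2^{i-1}\big)$, cancel the first $n$ factors against $(1\ominus\beta)^{n}_{\mathcal{R}(p,q)}$, and recognize the surviving tail $\prod_{j=1}^{m}\big(\phi_1^{n+j-1}-\beta\,\phi_2^{n+j-1}\big)=(\phi_1^{n}\ominus\beta\,\phi_2^{n})^{m}_{\mathcal{R}(p,q)}$; substituting then gives \eqref{ntskb}. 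I expect this $\phi$-shift bookkeeping to be the only genuine obstacle, the rest being a routine transcription of the summation arguments already used; for the mass function the sole conceptual point is pinning down the correct waiting-time model. (I also note that the $\beta_2$ appearing in the denominator of \eqref{ntskb} should read $\beta$, since the lemma involves only the single parameter $\beta$.)
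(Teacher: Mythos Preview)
The paper states this lemma without proof; it is treated as a known input (the univariate negative $\mathcal{R}(p,q)$-binomial distribution of the second kind and its factorial moments are part of the general formalism developed in \cite{HMD}, to which the paper repeatedly defers for the analogous univariate facts). So there is no in-paper argument to compare against.

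Your proposal is correct and follows exactly the natural route: the absorption identity
\[
[t]_{m,\mathcal{R}(p,q)}\,\genfrac{[}{]}{0pt}{}{n+t-1}{t}_{\mathcal{R}(p,q)}=[n+m-1]_{m,\mathcal{R}(p,q)}\,\genfrac{[}{]}{0pt}{}{n+t-1}{t-m}_{\mathcal{R}(p,q)}
\]
is precisely the device the paper uses throughout (e.g.\ in the proofs of Theorems~2.4 and~2.9), and the negative $\mathcal{R}(p,q)$-binomial summation \cite{HMRC} closes both the normalization and the moment computation. Your factorization $(1\ominus\beta)^{n+m}_{\mathcal{R}(p,q)}=(1\ominus\beta)^{n}_{\mathcal{R}(p,q)}\,(\phi_1^{n}\ominus\beta\,\phi_2^{n})^{m}_{\mathcal{R}(p,q)}$ is the shifted-factorial analogue of \eqref{tsk6} and is the right way to get the denominator into the stated form. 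Your observation that the $\beta_2$ in \eqref{ntskb} is a typo for $\beta$ is also correct.
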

Let $\underline{V}=\big(V_1,V_2\big)$ be a  $\mathcal{R}(p,q)$-random vector  obeying the negative $\mathcal{R}(p,q)$- trinomial probability distribution of the second kind, with parameters $n,$ $\underline{\beta}=\big(\beta_1,\beta_2\big),$ $p,$ and, $q.$ Then, its mass function is given by:
\begin{small}
	\begin{eqnarray*}
		P\big(\underline{V}=\underline{v}\big)=\genfrac{[}{]}{0pt}{}{n+v_1+v_2-1}{v_1,v_2}_{\mathcal {R}(p,q)}\beta^{v_1}_1\beta^{v_2}_2(1 \ominus \beta_1)^{n+v_2}_{\mathcal{R}(p,q)}(1 \ominus \beta_2)^{n}_{\mathcal{R}(p,q)},\,v_j\in\mathbb{N}\cup\{0\},
	\end{eqnarray*}
\end{small}
where $ 0<\beta_j<1$, and $j\in\{1,2\}.$
\begin{theorem}
For $m_1\in\mathbb{N}\cup\{0\}$ and $m_2\in\mathbb{N}\cup\{0\},$	the $\mathcal{R}(p,q)$-factorial moments of the negative $\mathcal{R}(p,q)$-trinomial probability distribution of the second kind, with parameters $n,$ $\underline{\beta}=\big(\beta_1,\beta_2\big),$ $p,$ and, $q$ are presented as follows:
	\begin{eqnarray}\label{ntsk2}
	E\big([V_2]_{m_2,\mathcal{R}(p,q)}\big)=\frac{\beta^{m_2}_2\,[n+m_2-1]_{m_2,\mathcal{R}(p,q)}}{\big(\phi^{n}_1 \ominus \beta_2\,\phi^{n}_2\big)^{m_2}_{\mathcal{R}(p,q)}},
	\end{eqnarray}
	\begin{eqnarray}\label{ntsk3}
	E\big([V_1]_{m_,\mathcal{R}(p,q)}|V_2=v_2\big)=\frac{\beta^{m_1}_1\,[n+v_2+m_1-1]_{m_1,\mathcal{R}(p,q)}}{\big(\phi^{n+v_2}_1 \ominus \beta_1\,\phi^{n+v_2}_2\big)^{m_1}_{\mathcal{R}(p,q)}},
	\end{eqnarray}
	\begin{eqnarray}\label{ntsk4}
	E\bigg(\frac{[V_1]_{m_1,\mathcal{R}(p,q)}}{\big(\phi^{n+V_2}_1 \ominus \beta_1\,\phi^{n+V_2}_2\big)^{-m_1}_{\mathcal{R}(p,q)}}\bigg)=\frac{\beta^{m_1}_1\,[n+m_1-1]_{m_1,\mathcal{R}(p,q)}}{\big(\phi^{n}_1 \ominus \beta_2\,\phi^{n}_2\big)^{m_1}_{\mathcal{R}(p,q)}},
	\end{eqnarray}
	and
	\begin{small}
	\begin{equation}\label{ntsk5}
	E\bigg(\frac{[V_1]_{m_1,\mathcal{R}(p,q)}[V_2]_{m_2,\mathcal{R}(p,q)}}{\big(\phi^{n+V_2}_1 \ominus \beta_1\phi^{n+V_2}_2\big)^{-m_2}_{\mathcal{R}(p,q)}}\bigg)=\frac{\beta^{m_1}_1\beta^{m_2}_2[n+m_1+m_2-1]_{m_1+m_2,\mathcal{R}(p,q)}}{\big(\phi^{n}_1 \ominus \beta_2\phi^{n}_2\big)^{m_1+m_2}_{\mathcal{R}(p,q)}}.
	\end{equation}
	\end{small}
\end{theorem}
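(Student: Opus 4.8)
The plan is to mirror the proof of the negative $\mathcal{R}(p,q)$-trinomial distribution of the first kind, combining the tower property of conditional expectation with the Lemma governing the negative $\mathcal{R}(p,q)$-binomial distribution of the second kind (relations \eqref{ntska} and \eqref{ntskb}). Throughout I suppress the subscript $\mathcal{R}(p,q)$ on factorials where no confusion arises. First I would factor the joint mass function of $\underline{V}=\big(V_1,V_2\big)$: using $\genfrac{[}{]}{0pt}{}{n+v_1+v_2-1}{v_1,v_2}_{\mathcal{R}(p,q)}=\genfrac{[}{]}{0pt}{}{n+v_1+v_2-1}{v_1}_{\mathcal{R}(p,q)}\genfrac{[}{]}{0pt}{}{n+v_2-1}{v_2}_{\mathcal{R}(p,q)}$ and distributing the $\ominus$-shifted factorials, the mass function separates into the product of $\genfrac{[}{]}{0pt}{}{n+v_2-1}{v_2}_{\mathcal{R}(p,q)}\beta_2^{v_2}(1\ominus\beta_2)^{n}_{\mathcal{R}(p,q)}$ and $\genfrac{[}{]}{0pt}{}{(n+v_2)+v_1-1}{v_1}_{\mathcal{R}(p,q)}\beta_1^{v_1}(1\ominus\beta_1)^{n+v_2}_{\mathcal{R}(p,q)}$. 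By \eqref{ntska} the first factor identifies the marginal law of $V_2$ as the negative $\mathcal{R}(p,q)$-binomial distribution of the second kind with parameters $n,\beta_2$, and the second identifies the conditional law of $V_1$ given $V_2=v_2$ as the same distribution with parameters $n+v_2,\beta_1$. Feeding these into the moment formula \eqref{ntskb} then yields \eqref{ntsk2} and \eqref{ntsk3} immediately.

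For \eqref{ntsk4} I would condition on $V_2$ and write $E(\,\cdot\,)=E\big(E(\,\cdot\,|V_2)\big)$. Since the negative power in the integrand is the reciprocal of the shift factor $\big(\phi_1^{n+V_2}\ominus\beta_1\phi_2^{n+V_2}\big)_{\mathcal{R}(p,q)}$ appearing in the denominator of the conditional moment \eqref{ntsk3}, the two cancel exactly, reducing the problem to $\beta_1^{m_1}\,E\big([n+V_2+m_1-1]_{m_1,\mathcal{R}(p,q)}\big)$. I would evaluate this by inserting the marginal of $V_2$, invoking the identity $[n+v_2+m_1-1]_{m_1}\genfrac{[}{]}{0pt}{}{n+v_2-1}{v_2}_{\mathcal{R}(p,q)}=[n+m_1-1]_{m_1}\genfrac{[}{]}{0pt}{}{n+m_1+v_2-1}{v_2}_{\mathcal{R}(p,q)}$, and summing via the negative $\mathcal{R}(p,q)$-binomial formula \cite{HMRC}, which contributes the factor $(1\ominus\beta_2)^{-(n+m_1)}_{\mathcal{R}(p,q)}$. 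The telescoping relation $(1\ominus\beta_2)^{n+m_1}_{\mathcal{R}(p,q)}=(1\ominus\beta_2)^{n}_{\mathcal{R}(p,q)}\big(\phi_1^{n}\ominus\beta_2\phi_2^{n}\big)^{m_1}_{\mathcal{R}(p,q)}$ then leaves precisely the denominator of \eqref{ntsk4}.

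Relation \eqref{ntsk5} follows the identical route, now carrying the extra factor $[V_2]_{m_2,\mathcal{R}(p,q)}$ through the conditioning. After the shift factor cancels the conditional denominator of \eqref{ntsk3} as above, one is left with $\beta_1^{m_1}\,E\big([V_2]_{m_2}[n+V_2+m_1-1]_{m_1}\big)$. I would close this using the product identity $[v_2]_{m_2}[n+v_2+m_1-1]_{m_1}\genfrac{[}{]}{0pt}{}{n+v_2-1}{v_2}_{\mathcal{R}(p,q)}=[n+m_1+m_2-1]_{m_1+m_2}\genfrac{[}{]}{0pt}{}{n+m_1+v_2-1}{v_2-m_2}_{\mathcal{R}(p,q)}$, a shift of the summation index by $m_2$, and one further application of the negative $\mathcal{R}(p,q)$-binomial formula; the same telescoping of $\ominus$-factorials now yields the exponent $m_1+m_2$ in the denominator of \eqref{ntsk5}.

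The main obstacle I anticipate is the bookkeeping of the $\ominus$-shifted factorials. Unlike the $\oplus$ case already deployed for the first kind, I must verify the minus-analogue $(1\ominus\beta_2)^{n+m}_{\mathcal{R}(p,q)}=(1\ominus\beta_2)^{n}_{\mathcal{R}(p,q)}\big(\phi_1^{n}\ominus\beta_2\phi_2^{n}\big)^{m}_{\mathcal{R}(p,q)}$ directly from the definition $\prod_{i=1}^{m}\big(u\,\phi_1^{i-1}-v\,\phi_2^{i-1}\big)$, checking that the tail indices $i=n+1,\dots,n+m$ reindex correctly onto $\big(\phi_1^{n}\ominus\beta_2\phi_2^{n}\big)^{m}_{\mathcal{R}(p,q)}$. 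Establishing this telescoping cleanly, and confirming that the index shift in the derivation of \eqref{ntsk5} preserves the range and convergence of the negative-binomial series, is where the real care is needed; the remaining manipulations are the same routine cancellations as in the first-kind theorem.
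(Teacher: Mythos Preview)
Your proposal is correct and follows essentially the same approach as the paper's own proof: you identify the marginal and conditional laws via the Lemma on the negative $\mathcal{R}(p,q)$-binomial of the second kind, then use the tower property together with the same binomial-coefficient identities and the negative $\mathcal{R}(p,q)$-binomial formula to close \eqref{ntsk4} and \eqref{ntsk5}. Your explicit verification of the telescoping relation for the $\ominus$-shifted factorials is a useful addition, as the paper invokes it (via relation \eqref{tsk6}) without re-deriving it in this context.
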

\begin{proof}
	The relation \eqref{ntsk2} comes from \eqref{ntska} and \eqref{ntskb}.  Furthermore, the conditional distribution of the $\mathcal{R}(p,q)$- random variable $V_1,$ given that $V_2=v_2,$ is a negative $\mathcal{R}(p,q)$-binomial  distribution of the second kind, with probability function: 
	\begin{eqnarray*}
		P\big(V_1=v_1|V_2=v_2\big)=\genfrac{[}{]}{0pt}{}{n+v_1+v_2-1}{v_1}_{\mathcal {R}(p,q)}\,\beta^{v_1}_1\,(1 \ominus \beta_1)^{n+v_2}_{\mathcal{R}(p,q)},\,v\in\mathbb{N}\cup\{0\}.
	\end{eqnarray*}
	Using once the relations \eqref{ntska} and \eqref{ntskb}, the conditional $\mathcal{R}(p,q)$- factorial moments of $V_1,$
	given that $V_2=v_2,$ are given by \eqref{ntsk3}. Besides, the expected value of the $\mathcal{R}(p,q)$-function of $\underline{V}=\big(V_1,V_2\big)$
	\begin{eqnarray*}
		\widehat{\Delta}:=\frac{[V_1]_{m_1,\mathcal{R}(p,q)}}{\big(\phi^{n+V_2}_1 \ominus \beta_2\,\phi^{n+V_2}_2\big)^{m_1}_{\mathcal{R}(p,q)}},\,\, m_1\in\mathbb{N}\cup\{0\}
	\end{eqnarray*}
	may be evaluated according to the relation:
	\begin{eqnarray*}
		E(\widehat{\Delta})&=& E\big[E(\widehat{\Delta}|V_2)\big]\nonumber\\&=&\beta^{m_1}_1\,E\bigg([n+V_2+m_1-1]_{m_1,\mathcal{R}(p,q)}\bigg).
	\end{eqnarray*}
	Since
	\begin{small}
		\begin{eqnarray*}
			E\bigg([n+V_2+m_1-1]_{m_1,\mathcal{R}(p,q)}\bigg)&=&\sum_{v_2=0}^{\infty}[n+v_2+m_1-1]_{m_1,\mathcal{R}(p,q)}\nonumber\\&\times&\genfrac{[}{]}{0pt}{}{n+v_2-1}{v_2}_{\mathcal {R}(p,q)}\beta^{m_2}_2\,(1 \ominus \beta_2)^{n}_{\mathcal{R}(p,q)}.
		\end{eqnarray*}
	\end{small}
	From the relation
	\begin{eqnarray*}
		[n+v_2+m_1-1]_{m_1,\mathcal{R}(p,q)}\genfrac{[}{]}{0pt}{}{n+v_2-1}{v_2}_{\mathcal {R}(p,q)}=[n+m_1-1]_{m_1,\mathcal{R}(p,q)}\genfrac{[}{]}{0pt}{}{n+m_1+v_2-1}{v_2}_{\mathcal {R}(p,q)},
	\end{eqnarray*}
	and the negative  $\mathcal{R}(p,q)$-binomial formula \cite{HMRC}, we have:
	\begin{eqnarray*}
		E\bigg([n+V_2+m_1-1]_{m_1,\mathcal{R}(p,q)}\bigg)&=&[n+m_1-1]_{m_1,\mathcal{R}(p,q)}(1 \ominus \beta_2)^{n}_{\mathcal{R}(p,q)}\sum_{v_2=0}^{\infty}\nonumber\\&\times&\genfrac{[}{]}{0pt}{}{n+v_2-1}{v_2}_{\mathcal {R}(p,q)}\beta^{m_2}_2\nonumber\\&=& \frac{[n+m_1-1]_{m_1,\mathcal{R}(p,q)}}{(\phi^{n}_1 \ominus \beta_2\phi^{n}_2)^{m_1}_{\mathcal{R}(p,q)}}.
	\end{eqnarray*}
	Thus, the relation \eqref{ntsk4} holds.
	
	So, the expected value of the $\mathcal{R}(p,q)$-function of $\underline{V}=\big(V_1,V_2\big)$
	\begin{eqnarray*}
		\widehat{\nabla}:=\frac{[V_1]_{m_1,\mathcal{R}(p,q)}[V_2]_{m_2,\mathcal{R}(p,q)}}{\big(\phi^{n+V_2}_1 \ominus \beta_1\,\phi^{n+V_2}_2\big)^{-m_1}_{\mathcal{R}(p,q)}},\,m_1\in\mathbb{N}\cup\{0\},\, m_2\in\mathbb{N}\cup\{0\}
	\end{eqnarray*}
	can be calculated using the relation:
	\begin{eqnarray*}
		E(\widehat{\nabla})&=& E\big[E(\widehat{\nabla}|V_2)\big]\nonumber\\&=&\beta^{m_1}_1\,E\bigg[[V_2]_{m_2,\mathcal{R}(p,q)}[n+V_2+m_1-1]_{m_1,\mathcal{R}(p,q)}\bigg].
	\end{eqnarray*}
	Since
	\begin{small}
		\begin{eqnarray*}
			E\bigg[[V_2]_{m_2,\mathcal{R}(p,q)}[n+V_2+m_1-1]_{m_1,\mathcal{R}(p,q)}\bigg]&=&\sum_{v_2=m_2=0}^{\infty}[v_2]_{m_2,\mathcal{R}(p,q)}[n+v_2+m_1-1]_{m_1,\mathcal{R}(p,q)}\nonumber\\&\times&\genfrac{[}{]}{0pt}{}{n+v_2-1}{v_2}_{\mathcal {R}(p,q)}\beta^{m_2}_2\,(1 \ominus \beta_2)^{n}_{\mathcal{R}(p,q)}.
		\end{eqnarray*}
	\end{small}
	From  the $\mathcal{R}(p,q)$-binomial formula \cite{HMRC}, the relations \eqref{tsk6}, and 
	\begin{eqnarray*}
		[v_2]_{m_2,\mathcal{R}(p,q)}[n+v_2+m_1-1]_{m_1,\mathcal{R}(p,q)}\genfrac{[}{]}{0pt}{}{n+v_2-1}{v_2}_{\mathcal {R}(p,q)}&=&[n+m_1+m_2-1]_{m_1+m_2,\mathcal{R}(p,q)}\nonumber\\&\times&\genfrac{[}{]}{0pt}{}{n+m_1+v_2-1}{v_2-m_2}_{\mathcal {R}(p,q)},
	\end{eqnarray*}
	we obtain:
	\begin{eqnarray*}
		E\bigg[[V_2]_{m_2,\mathcal{R}(p,q)}[n+V_2+m_1-1]_{m_1,\mathcal{R}(p,q)}\bigg]=\frac{[n+m_1+m_2-1]_{m_1+m_2,\mathcal{R}(p,q)}\,\beta^{m_2}_2}{\big(\phi^{n}_1 \ominus \beta_2\,\phi^{n}_2\big)^{m_1+m_2}_{\mathcal{R}(p,q)}}
	\end{eqnarray*}
	and the realtion \eqref{ntsk5} follows.
\end{proof}
\begin{corollary}
	The $\mathcal{R}(p,q)$-covariance of the functions  and $\widehat{V}:=(\phi^{n+V_2}_1 \ominus \beta_2\,\phi^{n+V_2}_2)[V_1]_{m_2,\mathcal{R}(p,q)}$ and $\overline{V}:=[V_2]_{m_1,\mathcal{R}(p,q)}$ is given by:
	\begin{eqnarray}
	Cov\big(\widehat{V},\overline{V}\big)=\frac{[n]_{\mathcal{R}(p,q)}\beta_1\beta_2}{(\phi^{n}_1 - \beta_2\phi^{n}_2)}\bigg(\frac{[n+1]_{\mathcal{R}(p,q)}}{(\phi^{n+1}_1 - \beta_2\phi^{n+1}_2)}-\frac{[n]_{\mathcal{R}(p,q)}}{(\phi^{n}_1- \beta_2\phi^{n}_2)}\bigg),
	\end{eqnarray}
	where  $\underline{V}=\big(V_1,V_2\big)$ is a  $\mathcal{R}(p,q)$-random vector  satisfying the negative $\mathcal{R}(p,q)$- trinomial probability distribution of the second kind, with parameters $n,$ $\underline{\beta}=\big(\beta_1,\beta_2\big),$ $p,$ and, $q.$ 
\end{corollary}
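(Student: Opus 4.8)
The plan is to reproduce the three-step scheme of the preceding covariance corollaries: expand the covariance through its definition, specialize the factorial-moment formulas of the preceding theorem to $m_1=m_2=1$, and simplify the shifted factorials that appear. Throughout I abbreviate $[k]:=[k]_{\mathcal{R}(p,q)}$.

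First I would record
$$Cov\big(\widehat{V},\overline{V}\big)=E\big(\widehat{V}\,\overline{V}\big)-E\big(\widehat{V}\big)\,E\big(\overline{V}\big).$$
Putting $m_2=1$ in \eqref{ntsk2} and using $\big(\phi^n_1\ominus\beta_2\phi^n_2\big)^1_{\mathcal{R}(p,q)}=\phi^n_1-\beta_2\phi^n_2$ gives $E\big(\overline{V}\big)=\beta_2[n]/(\phi^n_1-\beta_2\phi^n_2)$; putting $m_1=1$ in \eqref{ntsk4} gives $E\big(\widehat{V}\big)=\beta_1[n]/(\phi^n_1-\beta_2\phi^n_2)$. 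Hence the subtracted term is $E\big(\widehat{V}\big)E\big(\overline{V}\big)=\beta_1\beta_2[n]^2/(\phi^n_1-\beta_2\phi^n_2)^2$.

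Next I would set $m_1=m_2=1$ in \eqref{ntsk5}. Two elementary simplifications feed the computation: the factorial identity $[n+1]_{2,\mathcal{R}(p,q)}=[n+1]\,[n]$, obtained from $[u]_{r,\mathcal{R}(p,q)}=\prod_{i=1}^{r}[u-i+1]_{\mathcal{R}(p,q)}$, and the shifted-factorial expansion $\big(\phi^n_1\ominus\beta_2\phi^n_2\big)^2_{\mathcal{R}(p,q)}=(\phi^n_1-\beta_2\phi^n_2)(\phi^{n+1}_1-\beta_2\phi^{n+1}_2)$, which follows at once from the product definition of the $\mathcal{R}(p,q)$-shifted factorial. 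Together these yield $E\big(\widehat{V}\,\overline{V}\big)=\beta_1\beta_2[n+1]\,[n]\big/\big((\phi^n_1-\beta_2\phi^n_2)(\phi^{n+1}_1-\beta_2\phi^{n+1}_2)\big)$.

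Finally I would subtract the two expressions and factor out the common $[n]\beta_1\beta_2/(\phi^n_1-\beta_2\phi^n_2)$, which leaves precisely the bracketed difference $[n+1]/(\phi^{n+1}_1-\beta_2\phi^{n+1}_2)-[n]/(\phi^n_1-\beta_2\phi^n_2)$ of the statement. The only delicate point is the consistent bookkeeping of the shifted factorials evaluated at $n$ against those at $n+1$; once the expansion of $\big(\phi^n_1\ominus\beta_2\phi^n_2\big)^2_{\mathcal{R}(p,q)}$ above is fixed, everything else is routine algebra and the claimed formula drops out.
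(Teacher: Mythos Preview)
Your proposal is correct and follows essentially the same route as the paper: the paper's proof also sets $m_1=m_2=1$ in \eqref{ntsk2}, \eqref{ntsk4}, and \eqref{ntsk5} to obtain $E\big(\widehat{V}\,\overline{V}\big)$ and $E\big(\widehat{V}\big)E\big(\overline{V}\big)$, then subtracts. You simply spell out the algebraic simplifications (the expansion of $\big(\phi^{n}_1\ominus\beta_2\phi^{n}_2\big)^{2}_{\mathcal{R}(p,q)}$ and the identity $[n+1]_{2,\mathcal{R}(p,q)}=[n+1]_{\mathcal{R}(p,q)}[n]_{\mathcal{R}(p,q)}$) that the paper leaves implicit under ``after computation, the result follows.''
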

\begin{proof}
	Putting $m_1=m_2=1,$ in the relations \eqref{ntsk2}, \eqref{ntsk4}, and \eqref{ntsk5}, we obtain:
	\begin{small}
		\begin{eqnarray*}
			E\big(\widehat{V}\overline{V}\big)=\frac{[n+1]_{\mathcal{R}(p,q)}[n]_{\mathcal{R}(p,q)}\beta_1\beta_2}{(\phi^{n}_1 - \beta_2\phi^{n}_2)(\phi^{n+1}_1 - \beta_2\phi^{n+1}_2)}\,\,\mbox{and}\,\,E\big(\widehat{V}\big)E\big(\overline{V}\big)=\frac{[n]_{\mathcal{R}(p,q)}[n]_{\mathcal{R}(p,q)}\beta_2\beta_1}{(\phi^{n}_1- \beta_2\phi^{n}_2)^2}.
		\end{eqnarray*}
	\end{small}
	After computation, the result follows. 
\end{proof}
\section{Trinomial distributions and particular quantum algebras}
In this section, we deduce the $\mathcal{R}(p,q)$-trinomial probability distribution of the first and second kind, their negative and properties from some quantum algebra existing in the literature.
\subsection{Trinomial distribution and Biedenharn-Macfarlane algebra \cite{BC, M}}
Taking $\mathcal{R}(x)=\frac{x-x^{-1}}{q-q^{-1}},$ we obtain:
\begin{enumerate}
	\item[(i)]  The $q$-trinomial  distribution of the first kind
	is given by
	\begin{eqnarray}
	P\big(Y_1=y_1,Y_2=y_2\big)=\genfrac{[}{]}{0pt}{}{n}{y_1,y_2}_{q}\frac{\alpha^{y_1}_1\alpha^{y_2}_2\,q^{{n-y_1\choose 2}+{n-y_2\choose 2}}\,q^{-{y_1\choose 2}-{y_2\choose 2}}}{(1 \oplus \alpha_1)^{n}_{q}(1 \oplus \alpha_2)^{n-y_1}_{q}}
	\end{eqnarray}
	and the $q$-factorial moments are:
	\begin{small}
		\begin{eqnarray*}
			E\big([Y_1]_{m_1,q}\big)=\frac{[n]_{m_1,q}\,\alpha^{m_1}_1\,q^{-{m_1\choose 2}}}{(1 \oplus \alpha_1)^{m_1}_{q}},\,m_1\in\{0,1,\cdots,n\},\\
			%		\end{eqnarray*}
			%		\begin{eqnarray*}
			E\big([Y_2]_{m_2,q}|Y_1=y_1\big)=\frac{[n-y_1]_{m_2,q}\,\alpha^{m_2}_2\,q^{-{m_2\choose 2}}}{(1 \oplus \alpha_2)^{m_2}_{q}},\,m_2\in\{0,1,\cdots,n-y_1\},\\
			%		\end{eqnarray*}
			%		\begin{eqnarray*}
			E\big([Y_2]_{m_2,p,q}\big)=\frac{[n]_{m_2,p,q}\,\alpha^{m_2}_2\,q^{m_2\choose 2}\,p^{{m_2\choose 2}+m_2(n-m_2)}}{(1 \oplus \alpha_2)^{m_2}_{p,q}(p^{n-m_2} \oplus \alpha_1\,q^{n-m_2})^{m_2}_{p,q}},\,m_2\in\{0,1,\cdots,n\},
		\end{eqnarray*}
		and 
		\begin{eqnarray*}
			E\big([Y_1]_{m_1,
				q}[Y_2]_{m_2,q}\big)
			=\frac{[n]_{m_1+m_2,q}\alpha^{m_1}_1\alpha^{m_2}_2\,q^{{m_2\choose 2}+m_2(n-m_2)}q^{-{m_1\choose 2}-{m_2 \choose 2}}}{(1 \oplus \alpha_1)^{m_1}_{q}(1 \oplus \alpha_2)^{m_2}_{q}(p^{n-m_2} \oplus \alpha_1\,q^{n-m_2})^{m_2}_{q}},
		\end{eqnarray*}
		where $m_1\in\{0,1,\cdots, n-m_2\}$ and $m_2\in\{0,1,\cdots, n\}.$ Moreover, the  $q$- covariance is derived as follows:
		\begin{eqnarray*}
			Cov\big([Y_1]_{q},[Y_2]_{q}\big)
			&=&\frac{q^{n-1}\alpha_1\,\alpha_2\,[n]_{q}\big([n-1]_{q}-[n]_{q}\big)}{(1 + \alpha_1)(1 + \alpha_2)(q^{n-1} + \alpha_1\,q^{-n+1})}.
		\end{eqnarray*}
	\end{small}
	\item[(ii)] The negative  $q$-trinomial  distribution of the first kind, with parameters $n,$ $\underline{\alpha}=\big(\alpha_1,\alpha_2\big),$  and, $q$ is presented by:
	\begin{small}
		\begin{eqnarray*}
			P\big(\underline{W}=\underline{w}\big)=\genfrac{[}{]}{0pt}{}{n+w_1+w_2-1}{w_1,w_2}_{q}\frac{\alpha^{w_1}_1\alpha^{w_2}_2\,q^{{n-w_1\choose 2}+{n-w_2\choose 2}}\,q^{-{w_1\choose 2}-{w_2\choose 2}}}{(1 \oplus \alpha_1)^{n+w_1+w_2}_{q}(1 \oplus \alpha_2)^{n+w_2}_{q}},
		\end{eqnarray*}
	\end{small}
	where $w_j\in\mathbb{N}\cup\{0\}, 0<\alpha_j<1$, and $j\in\{1,2\}.$
	Besides, for $m_1\in\mathbb{N}\cup\{0\}$ and $m_2\in\mathbb{N}\cup\{0\},$ 
	
	its $q^{-1}$-factorial moments  are given as follows:
	\begin{small}
		\begin{eqnarray*}
			E\big([W_2]_{m_2,q^{-1}}\big)=[n+m_2-1]_{m_2,q}\,\alpha^{m_2}_2,\,m_2\in\mathbb{N}\cup\{0\},
		\end{eqnarray*}
		\begin{eqnarray*}
			E\big([W_1]_{m_1,q^{-1}}|W_2=w_2\big)=[n+w_2+m_1-1]_{m_1,q}\,\alpha^{m_1}_1,\,m_1\in\mathbb{N}\cup\{0\},
		\end{eqnarray*}
		\begin{eqnarray*}
			E\bigg(\frac{[W_1]_{m_1,q^{-1}}}{\big(p^{n+W_2} \oplus \alpha_2\,q^{-n-W_2}\big)^{m_1}_{q}}\bigg)=[n+m_1-1]_{m_1,q}\,\alpha^{m_1}_1,\,m_1\in\mathbb{N}\cup\{0\},
		\end{eqnarray*}
		and 
		\begin{eqnarray*}
			E\bigg(\frac{[W_1]_{m_1,q^{-1}}\,[W_2]_{m_2,q^{-1}}}{\big(p^{n+W_2} \oplus \alpha_2\,q^{n+W_2}\big)^{m_1}_{q}}\bigg)
			=[n+m_1+m_2-1]_{m_1+m_2,q}\,\alpha^{m_1}_1\alpha^{m_2}_2,
		\end{eqnarray*}
	\end{small}
	Furthermore, 
	the covariance of $\widehat{W}:=(q^{n+W_2} \oplus \alpha_2\,q^{-n-W_2})^{-1}[W_1]_{m_1,q^{-1}}$ and $\overline{W}:=[W_2]_{m_2,q^{-1}}$ is determined by:
	\begin{eqnarray*}
		Cov\big(\widehat{W},\overline{W}\big)=[n]_{q}\alpha_1\alpha_2\big([n+1]_{q}-[n]_{q}\big).
	\end{eqnarray*}
	\item[(iii)]  The $q$-trinomial distribution of the second kind, with parameters $n,$ $\underline{\beta}=\big(\beta_1,\beta_2\big),$  and, $q$ is given by:
	\begin{small}
		\begin{eqnarray*}
			P\big(X_1=x_1,X_2=x_2\big)=\genfrac{[}{]}{0pt}{}{n}{x_1,x_2}_{q}\,\beta^{x_1}_1\,\beta^{x_2}_2(1 \ominus \beta_1)^{n-x_1}_{q}(1 \ominus \beta_2)^{n-x_1-x_2}_{q},
		\end{eqnarray*}
	\end{small}
	where $x_j\in\{0,1,\cdots,n\}, x_1+x_2\leq n, s_j=\displaystyle\sum_{i=1}^{j}x_j, 0<\beta_j<1$, and $j\in\{1,2\}.$
	Besides, its 
	$q$-factorial moments  are given by:
	\begin{eqnarray*}
		E\big([X_1]_{m_1,q}\big)=[n]_{m_1,q}\,\beta^{m_1}_1,\,m_1\in\{0,1,\cdots,n\},
	\end{eqnarray*}
	\begin{eqnarray*}
		E\big([X_2]_{m_2,q}|X_1=x_1\big)=[n-x_1]_{m_2,q}\,\beta^{m_2}_2,\,m_2\in\{0,1,\cdots,n-x_1\},
	\end{eqnarray*}
	\begin{eqnarray*}
		E\bigg(\frac{[X_2]_{m_2,q}}{\big(p^{n-m_2-X_1} \ominus \beta_1\,q^{n-m_2-X_1}\big)^{m_2}_{q}}\bigg)=[n]_{m_2,q}\,\beta^{m_2}_2, \, m_2\in\{0,1,\cdots,n\},
	\end{eqnarray*}
	and
	\begin{eqnarray*}
		E\bigg(\frac{[X_1]_{m_1,q}[X_2]_{m_2,q}}{\big(p^{n-m_2-X_1} \ominus \beta_1\,q^{n-m_2-X_1}\big)^{m_2}_{q}}\bigg)=[n]_{m_1+m_2,q}\,\beta^{m_1}_1\,\beta^{m_2}_2,
	\end{eqnarray*}
	where $m_1\in\{0,1,\cdots,n-m_2\}$ and $m_2\in\{0,1,\cdots,n\}.$ Moreover, 
	the  $q$- covariance of the functions  $\big(q^{n-1-X_1} - \beta_1\,q^{-n+1+X_1}\big)^{-1}[X_2]_{q}$ and $[X_1]_{q}$  is given by:
	\begin{small}
		\begin{eqnarray*}
			Cov\bigg[[X_1]_{q}, \big(q^{n-1-X_1} - \beta_1\,q^{-n+1+X_1}\big)^{-1}[X_2]_{q}\bigg]=[n]_{q}\beta_1\beta_2\big([n-1]_{q}-[n]_{q}\big).
		\end{eqnarray*}
	\end{small} 
	\item[(iii)]
	The negative $q$- trinomial probability distribution of the second kind, with parameters $n,$ $\underline{\beta}=\big(\beta_1,\beta_2\big),$  and, $q$  is given by:
	\begin{small}
		\begin{eqnarray*}
			P\big(\underline{V}=\underline{v}\big)=\genfrac{[}{]}{0pt}{}{n+v_1+v_2-1}{v_1,v_2}_{q}\beta^{v_1}_1\beta^{v_2}_2(1 \ominus \beta_1)^{n+v_2}_{q}(1 \ominus \beta_2)^{n}_{q},\,v_j\in\mathbb{N}\cup\{0\},
		\end{eqnarray*}
	\end{small}
	where $ 0<\beta_j<1$, and $j\in\{1,2\}.$
	Moreover, 
	its $q$-factorial moments of  are presented as follows:
	\begin{eqnarray*}
		E\big([V_2]_{m_2,q}\big)=\frac{[n+m_2-1]_{m_2,q}\,\beta^{m_2}_2}{\big(p^{n} \ominus \beta_2\,q^{n}\big)^{m_2}_{q}},
	\end{eqnarray*}
	\begin{eqnarray*}
		E\big([V_1]_{m_,q}|V_2=v_2\big)=\frac{[n+v_2+m_1-1]_{m_1,q}\,\beta^{m_1}_1}{\big(p^{n+v_2} \ominus \beta_1\,q^{n+v_2}\big)^{m_1}_{q}},
	\end{eqnarray*}
	\begin{eqnarray*}
		E\bigg(\frac{[V_1]_{m_1,q}}{\big(p^{n+V_2} \ominus \beta_1\,q^{n+V_2}\big)^{-m_1}_{q}}\bigg)=\frac{[n+m_1-1]_{m_1,q}\,\beta^{m_1}_1}{\big(p^{n} \ominus \beta_2\,q^{n}\big)^{m_1}_{q}},
	\end{eqnarray*}
	and
	\begin{eqnarray*}
		E\bigg(\frac{[V_1]_{m_1,q}[V_2]_{m_2,q}}{\big(p^{n+V_2} \ominus \beta_1\,q^{n+V_2}\big)^{-m_2}_{q}}\bigg)=\frac{[n+m_1+m_2-1]_{m_1+m_2,q}\,\beta^{m_1}_1\,\beta^{m_2}_2}{\big(p^{n} \ominus \beta_2\,q^{n}\big)^{m_1+m_2}_{q}},
	\end{eqnarray*}
	where $m_1\in\mathbb{N}\cup\{0\}$ and $m_2\in\mathbb{N}\cup\{0\}.$
	Furthermore,   the $q$-covariance of the functions  and $\widehat{V}:=(q^{n+V_2} - \beta_2\,q^{-n-V_2})[V_1]_{m_2,q}$ and $\overline{V}:=[V_2]_{m_1,q}$ is given by:
	\begin{eqnarray*}
		Cov\big(\widehat{V},\overline{V}\big)=\frac{[n]_{q}\beta_1\beta_2}{(q^{n} - \beta_2\,q^{-n})}\bigg(\frac{[n+1]_{q}}{(q^{n+1} - \beta_2\,q^{-n-1})}-\frac{[n]_{q}}{(q^{n}- \beta_2\,q^{-n})}\bigg).
	\end{eqnarray*}
\end{enumerate}
\subsection{Trinomial distribution and Jagannathan-Srinivasa algebra \cite{JS}}
 For illustration, we investigate    by taking $\mathcal{R}(x,y)=(p-q)^{-1}(x-y):$
	\begin{enumerate}
		\item[(i)]  The $(p,q)$-trinomial  distribution of the first kind
		is given by
		\begin{eqnarray}\label{pqmfk}
		P\big(Y_1=y_1,Y_2=y_2\big)=\genfrac{[}{]}{0pt}{}{n}{y_1,y_2}_{p,q}\frac{\alpha^{y_1}_1\alpha^{y_2}_2\,p^{{n-y_1\choose 2}+{n-y_2\choose 2}}\,q^{{y_1\choose 2}+{y_2\choose 2}}}{(1 \oplus \alpha_1)^{n}_{p,q}(1 \oplus \alpha_2)^{n-y_1}_{p,q}}
		\end{eqnarray}
		and the $(p,q)$-factorial moments are:
		\begin{small}
			\begin{eqnarray*}
				E\big([Y_1]_{m_1,p,q}\big)=\frac{[n]_{m_1,p,q}\,\alpha^{m_1}_1\,q^{m_1\choose 2}}{(1 \oplus \alpha_1)^{m_1}_{p,q}},\,m_1\in\{0,1,\cdots,n\},\\
				%		\end{eqnarray*}
				%		\begin{eqnarray*}
				E\big([Y_2]_{m_2,p,q}|Y_1=y_1\big)=\frac{[n-y_1]_{m_2,p,q}\,\alpha^{m_2}_2\,q^{m_2\choose 2}}{(1 \oplus \alpha_2)^{m_2}_{p,q}},\,m_2\in\{0,1,\cdots,n-y_1\},\\
				%		\end{eqnarray*}
				%		\begin{eqnarray*}
				E\big([Y_2]_{m_2,p,q}\big)=\frac{[n]_{m_2,p,q}\,\alpha^{m_2}_2\,q^{m_2\choose 2}\,p^{{m_2\choose 2}+m_2(n-m_2)}}{(1 \oplus \alpha_2)^{m_2}_{p,q}(p^{n-m_2} \oplus \alpha_1\,q^{n-m_2})^{m_2}_{p,q}},\,m_2\in\{0,1,\cdots,n\},
			\end{eqnarray*}
			and 
			\begin{eqnarray*}
				E\big([Y_1]_{m_1,p,q}[Y_2]_{m_2,p,q}\big)
				=\frac{[n]_{m_1+m_2,p,q}\alpha^{m_1}_1\alpha^{m_2}_2\,p^{{m_2\choose 2}+m_2(n-m_2)}q^{{m_1\choose 2}+{m_2 \choose 2}}}{(1 \oplus \alpha_1)^{m_1}_{p,q}(1 \oplus \alpha_2)^{m_2}_{p,q}(p^{n-m_2} \oplus \alpha_1\,q^{n-m_2})^{m_2}_{p,q}},
			\end{eqnarray*}
			where $m_1\in\{0,1,\cdots, n-m_2\}$ and $m_2\in\{0,1,\cdots, n\}.$ Moreover, the  $(p,q)$- covariance is derived as follows:
			\begin{eqnarray*}
				Cov\big([Y_1]_{p,q},[Y_2]_{\mathcal{R}(p,q)}\big)
				&=&\frac{p^{n-1}\alpha_1\,\alpha_2\,[n]_{p,q}\big([n-1]_{p,q}-[n]_{p,q}\big)}{(1 + \alpha_1)(1 + \alpha_2)(p^{n-1} + \alpha_1\,q^{n-1})}.
			\end{eqnarray*}
		\end{small}
		\item[(ii)] The negative  $(p,q)$-trinomial  distribution of the first kind, with parameters $n,$ $\underline{\alpha}=\big(\alpha_1,\alpha_2\big),$ $p,$ and, $q$ is presented by:
		\begin{small}
			\begin{eqnarray*}
				P\big(\underline{W}=\underline{w}\big)=\genfrac{[}{]}{0pt}{}{n+w_1+w_2-1}{w_1,w_2}_{p,q}\frac{\alpha^{w_1}_1\alpha^{w_2}_2\,p^{{n-w_1\choose 2}+{n-w_2\choose 2}}\,q^{{w_1\choose 2}+{w_2\choose 2}}}{(1 \oplus \alpha_1)^{n+w_1+w_2}_{p,q}(1 \oplus \alpha_2)^{n+w_2}_{p,q}},
			\end{eqnarray*}
		\end{small}
		where $w_j\in\mathbb{N}\cup\{0\}, 0<\alpha_j<1$, and $j\in\{1,2\}.$
		Besides, for $m_1\in\mathbb{N}\cup\{0\}$ and $m_2\in\mathbb{N}\cup\{0\},$ 
		
		its $(p^{-1},q^{-1})$-factorial moments  are given as follows:
		\begin{small}
			\begin{eqnarray*}
				E\big([W_2]_{m_2,p^{-1},q^{-1}}\big)=[n+m_2-1]_{m_2,p,q}\,\alpha^{m_2}_2,\,m_2\in\mathbb{N}\cup\{0\},
			\end{eqnarray*}
			\begin{eqnarray*}
				E\big([W_1]_{m_1,p^{-1},q^{-1}}|W_2=w_2\big)=[n+w_2+m_1-1]_{m_1,p,q}\,\alpha^{m_1}_1,\,m_1\in\mathbb{N}\cup\{0\},
			\end{eqnarray*}
			\begin{eqnarray*}
				E\bigg(\frac{[W_1]_{m_1,p^{-1},q^{-1}}}{\big(p^{n+W_2} \oplus \alpha_2\,q^{n+W_2}\big)^{m_1}_{p,q}}\bigg)=[n+m_1-1]_{m_1,p,q}\,\alpha^{m_1}_1,\,m_1\in\mathbb{N}\cup\{0\},
			\end{eqnarray*}
			and 
				\begin{eqnarray*}
					E\bigg(\frac{[W_1]_{m_1,p^{-1},q^{-1}}\,[W_2]_{m_2,p^{-1},q^{-1}}}{\big(p^{n+W_2} \oplus \alpha_2\,q^{n+W_2}\big)^{m_1}_{p,q}}\bigg)
					=[n+m_1+m_2-1]_{m_1+m_2,p,q}\,\alpha^{m_1}_1\alpha^{m_2}_2,
				\end{eqnarray*}
			\end{small}
		Furthermore, 
		the covariance of $\widehat{W}:=(p^{n+W_2} \oplus \alpha_2\,q^{n+W_2})^{-1}[W_1]_{m_1,p^{-1},q^{-1}}$ and $\overline{W}:=[W_2]_{m_2,p^{-1},q^{-1}}$ is determined by:
		\begin{eqnarray*}
			Cov\big(\widehat{W},\overline{W}\big)=[n]_{p,q}\alpha_1\alpha_2\big([n+1]_{p,q}-[n]_{p,q}\big).
		\end{eqnarray*}
		\item[(iii)]  The $(p,q)$-trinomial distribution of the second kind, with parameters $n,$ $\underline{\beta}=\big(\beta_1,\beta_2\big),$ $p,$ and, $q$ is given by:
		\begin{small}
			\begin{eqnarray*}
				P\big(X_1=x_1,X_2=x_2\big)=\genfrac{[}{]}{0pt}{}{n}{x_1,x_2}_{p,q}\,\beta^{x_1}_1\,\beta^{x_2}_2(1 \ominus \beta_1)^{n-x_1}_{p,q}(1 \ominus \beta_2)^{n-x_1-x_2}_{p,q},
			\end{eqnarray*}
		\end{small}
		where $x_j\in\{0,1,\cdots,n\}, x_1+x_2\leq n, s_j=\displaystyle\sum_{i=1}^{j}x_j, 0<\beta_j<1$, and $j\in\{1,2\}.$
		Besides, its 
		$(p,q)$-factorial moments  are given by:
		\begin{eqnarray*}
			E\big([X_1]_{m_1,p,q}\big)=[n]_{m_1,p,q}\,\beta^{m_1}_1,\,m_1\in\{0,1,\cdots,n\},
		\end{eqnarray*}
		\begin{eqnarray*}
			E\big([X_2]_{m_2,p,q}|X_1=x_1\big)=[n-x_1]_{m_2,p,q}\,\beta^{m_2}_2,\,m_2\in\{0,1,\cdots,n-x_1\},
		\end{eqnarray*}
		\begin{eqnarray*}
			E\bigg(\frac{[X_2]_{m_2,p,q}}{\big(p^{n-m_2-X_1} \ominus \beta_1\,q^{n-m_2-X_1}\big)^{m_2}_{p,q}}\bigg)=[n]_{m_2,p,q}\,\beta^{m_2}_2, \, m_2\in\{0,1,\cdots,n\},
		\end{eqnarray*}
		and
		\begin{eqnarray*}
			E\bigg(\frac{[X_1]_{m_1,p,q}[X_2]_{m_2,p,q}}{\big(p^{n-m_2-X_1} \ominus \beta_1\,q^{n-m_2-X_1}\big)^{m_2}_{p,q}}\bigg)=[n]_{m_1+m_2,p,q}\,\beta^{m_1}_1\,\beta^{m_2}_2,
		\end{eqnarray*}
		where $m_1\in\{0,1,\cdots,n-m_2\}$ and $m_2\in\{0,1,\cdots,n\}.$ Moreover, 
		the  $(p,q)$- covariance of the functions  $\big(p^{n-1-X_1} - \beta_1\,q^{n-1-X_1}\big)^{-1}[X_2]_{p,q}$ and $[X_1]_{p,q}$  is given by:
		\begin{small}
			\begin{eqnarray*}
				Cov\bigg[[X_1]_{p,q}, \big(p^{n-1-X_1} - \beta_1\,q^{n-1-X_1}\big)^{-1}[X_2]_{p,q}\bigg]=[n]_{p,q}\beta_1\beta_2\big([n-1]_{p,q}-[n]_{p,q}\big).
			\end{eqnarray*}
		\end{small} 
		\item[(iii)]
		The negative $(p,q)$- trinomial probability distribution of the second kind, with parameters $n,$ $\underline{\beta}=\big(\beta_1,\beta_2\big),$ $p,$ and, $q$  is given by:
		\begin{small}
			\begin{eqnarray*}
				P\big(\underline{V}=\underline{v}\big)=\genfrac{[}{]}{0pt}{}{n+v_1+v_2-1}{v_1,v_2}_{p,q}\beta^{v_1}_1\beta^{v_2}_2(1 \ominus \beta_1)^{n+v_2}_{p,q}(1 \ominus \beta_2)^{n}_{p,q},\,v_j\in\mathbb{N}\cup\{0\},
			\end{eqnarray*}
		\end{small}
		where $ 0<\beta_j<1$, and $j\in\{1,2\}.$
		Moreover, 
		its $(p,q)$-factorial moments of  are presented as follows:
		\begin{eqnarray*}
			E\big([V_2]_{m_2,p,q}\big)=\frac{[n+m_2-1]_{m_2,p,q}\,\beta^{m_2}_2}{\big(p^{n} \ominus \beta_2\,q^{n}\big)^{m_2}_{p,q}},
		\end{eqnarray*}
		\begin{eqnarray*}
			E\big([V_1]_{m_,p,q}|V_2=v_2\big)=\frac{[n+v_2+m_1-1]_{m_1,p,q}\,\beta^{m_1}_1}{\big(p^{n+v_2} \ominus \beta_1\,q^{n+v_2}\big)^{m_1}_{p,q}},
		\end{eqnarray*}
		\begin{eqnarray*}
			E\bigg(\frac{[V_1]_{m_1,p,q}}{\big(p^{n+V_2} \ominus \beta_1\,q^{n+V_2}\big)^{-m_1}_{p,q}}\bigg)=\frac{[n+m_1-1]_{m_1,p,q}\,\beta^{m_1}_1}{\big(p^{n} \ominus \beta_2\,q^{n}\big)^{m_1}_{p,q}},
		\end{eqnarray*}
		and
		\begin{eqnarray*}
			E\bigg(\frac{[V_1]_{m_1,p,q}[V_2]_{m_2,p,q}}{\big(p^{n+V_2} \ominus \beta_1\,q^{n+V_2}\big)^{-m_2}_{p,q}}\bigg)=\frac{[n+m_1+m_2-1]_{m_1+m_2,p,q}\,\beta^{m_1}_1\,\beta^{m_2}_2}{\big(p^{n} \ominus \beta_2\,q^{n}\big)^{m_1+m_2}_{p,q}},
		\end{eqnarray*}
		where $m_1\in\mathbb{N}\cup\{0\}$ and $m_2\in\mathbb{N}\cup\{0\}.$
		Furthermore,   the $(p,q)$-covariance of the functions  and $\widehat{V}:=(p^{n+V_2} \ominus \beta_2\,q^{n+V_2})[V_1]_{m_2,p,q}$ and $\overline{V}:=[V_2]_{m_1,p,q}$ is given by:
		\begin{eqnarray*}
			Cov\big(\widehat{V},\overline{V}\big)=\frac{[n]_{p,q}\beta_1\beta_2}{(p^{n} - \beta_2\,q^{n})}\bigg(\frac{[n+1]_{p,q}}{(p^{n+1} - \beta_2\,q^{n+1})}-\frac{[n]_{p,q}}{(p^{n}- \beta_2\,q^{n})}\bigg).
		\end{eqnarray*}
	\end{enumerate}
\subsection{Trinomial distribution and Chakrabarty and Jagannathan algebra \cite{CJ}}
 Putting $\mathcal{R}(x,y)=\frac{1-xy}{(p^{-1}-q)x},$ we obtain:
\begin{enumerate}
	\item[(i)]  The $(p^{-1},q)$-trinomial  distribution of the first kind
	is given by
	\begin{eqnarray*}
	P\big(Y_1=y_1,Y_2=y_2\big)=\genfrac{[}{]}{0pt}{}{n}{y_1,y_2}_{p^{-1},q}\frac{\alpha^{y_1}_1\alpha^{y_2}_2\,p^{-{n-y_1\choose 2}-{n-y_2\choose 2}}\,q^{{y_1\choose 2}+{y_2\choose 2}}}{(1 \oplus \alpha_1)^{n}_{p^{-1},q}(1 \oplus \alpha_2)^{n-y_1}_{p^{-1},q}}
	\end{eqnarray*}
	and the $(p^{-1},q)$-factorial moments are:
	\begin{small}
		\begin{eqnarray*}
			E\big([Y_1]_{m_1,p^{-1},q}\big)=\frac{[n]_{m_1,p^{-1},q}\,\alpha^{m_1}_1\,q^{m_1\choose 2}}{(1 \oplus \alpha_1)^{m_1}_{p^{-1},q}},\,m_1\in\{0,1,\cdots,n\},\\
			%		\end{eqnarray*}
			%		\begin{eqnarray*}
			E\big([Y_2]_{m_2,p^{-1},q}|Y_1=y_1\big)=\frac{[n-y_1]_{m_2,p^{-1},q}\,\alpha^{m_2}_2\,q^{m_2\choose 2}}{(1 \oplus \alpha_2)^{m_2}_{p^{-1},q}},\,m_2\in\{0,1,\cdots,n-y_1\},\\
			%		\end{eqnarray*}
			%		\begin{eqnarray*}
			E\big([Y_2]_{m_2,p^{-1},q}\big)=\frac{[n]_{m_2,p^{-1},q}\,\alpha^{m_2}_2\,q^{m_2\choose 2}\,p^{-{m_2\choose 2}-m_2(n-m_2)}}{(1 \oplus \alpha_2)^{m_2}_{p^{-1},q}(p^{n-m_2} \oplus \alpha_1\,q^{n-m_2})^{m_2}_{p^{-1},q}},\,m_2\in\{0,1,\cdots,n\},
		\end{eqnarray*}
		and 
		\begin{eqnarray*}
			E\big([Y_1]_{m_1,p^{-1},q}[Y_2]_{m_2,p^{-1},q}\big)
			=\frac{[n]_{m_1+m_2,p^{-1},q}\alpha^{m_1}_1\alpha^{m_2}_2\,p^{-{m_2\choose 2}-m_2(n-m_2)}q^{{m_1\choose 2}+{m_2 \choose 2}}}{(1 \oplus \alpha_1)^{m_1}_{p^{-1},q}(1 \oplus \alpha_2)^{m_2}_{p^{-1},q}(p^{n-m_2} \oplus \alpha_1\,q^{n-m_2})^{m_2}_{p^{-1},q}},
		\end{eqnarray*}
		where $m_1\in\{0,1,\cdots, n-m_2\}$ and $m_2\in\{0,1,\cdots, n\}.$ Moreover, the  $(p^{-1},q)$- covariance is derived as follows:
		\begin{eqnarray*}
			Cov\big([Y_1]_{p^{-1},q},[Y_2]_{p^{-1},q}\big)
			&=&\frac{p^{-n+1}\alpha_1\,\alpha_2\,[n]_{p^{-1},q}\big([n-1]_{p^{-1},q}-[n]_{p^{-1},q}\big)}{(1 + \alpha_1)(1 + \alpha_2)(p^{-n+1} + \alpha_1\,q^{n-1})}.
		\end{eqnarray*}
	\end{small}
	\item[(ii)] The negative  $(p^{-1},q)$-trinomial  distribution of the first kind, with parameters $n,$ $\underline{\alpha}=\big(\alpha_1,\alpha_2\big),$ $p,$ and, $q$ is presented by:
	\begin{small}
		\begin{eqnarray*}
			P\big(\underline{W}=\underline{w}\big)=\genfrac{[}{]}{0pt}{}{n+w_1+w_2-1}{w_1,w_2}_{p^{-1},q}\frac{\alpha^{w_1}_1\alpha^{w_2}_2\,p^{-{n-w_1\choose 2}-{n-w_2\choose 2}}\,q^{{w_1\choose 2}+{w_2\choose 2}}}{(1 \oplus \alpha_1)^{n+w_1+w_2}_{p^{-1},q}(1 \oplus \alpha_2)^{n+w_2}_{p^{-1},q}},
		\end{eqnarray*}
	\end{small}
	where $w_j\in\mathbb{N}\cup\{0\}, 0<\alpha_j<1$, and $j\in\{1,2\}.$
	Besides, for $m_1\in\mathbb{N}\cup\{0\}$ and $m_2\in\mathbb{N}\cup\{0\},$ 
	
	its $(p,q^{-1})$-factorial moments  are given as follows:
	\begin{small}
		\begin{eqnarray*}
			E\big([W_2]_{m_2,p,q^{-1}}\big)=[n+m_2-1]_{m_2,p^{-1},q}\,\alpha^{m_2}_2,\,m_2\in\mathbb{N}\cup\{0\},
		\end{eqnarray*}
		\begin{eqnarray*}
			E\big([W_1]_{m_1,p,q^{-1}}|W_2=w_2\big)=[n+w_2+m_1-1]_{m_1,p^{-1},q}\,\alpha^{m_1}_1,\,m_1\in\mathbb{N}\cup\{0\},
		\end{eqnarray*}
		\begin{eqnarray*}
			E\bigg(\frac{[W_1]_{m_1,p,q^{-1}}}{\big(p^{n+W_2} \oplus \alpha_2\,q^{n+W_2}\big)^{m_1}_{p^{-1},q}}\bigg)=[n+m_1-1]_{m_1,p^{-1},q}\,\alpha^{m_1}_1,\,m_1\in\mathbb{N}\cup\{0\},
		\end{eqnarray*}
		and 
		\begin{eqnarray*}
			E\bigg(\frac{[W_1]_{m_1,p,q^{-1}}\,[W_2]_{m_2,p,q^{-1}}}{\big(p^{n+W_2} \oplus \alpha_2\,q^{n+W_2}\big)^{m_1}_{p^{-1},q}}\bigg)
			=[n+m_1+m_2-1]_{m_1+m_2,p^{-1},q}\,\alpha^{m_1}_1\alpha^{m_2}_2,
		\end{eqnarray*}
	\end{small}
	Furthermore, 
	the covariance of $\widehat{W}:=(p^{-n-W_2} - \alpha_2\,q^{n+W_2})^{-1}[W_1]_{m_1,p,q^{-1}}$ and $\overline{W}:=[W_2]_{m_2,p,q^{-1}}$ is determined by:
	\begin{eqnarray*}
		Cov\big(\widehat{W},\overline{W}\big)=[n]_{p^{-1},q}\alpha_1\alpha_2\big([n+1]_{p^{-1},q}-[n]_{p^{-1},q}\big).
	\end{eqnarray*}
	\item[(iii)]  The $(p^{-1},q)$-trinomial distribution of the second kind, with parameters $n,$ $\underline{\beta}=\big(\beta_1,\beta_2\big),$ $p,$ and, $q$ is given by:
	\begin{small}
		\begin{eqnarray*}
			P\big(X_1=x_1,X_2=x_2\big)=\genfrac{[}{]}{0pt}{}{n}{x_1,x_2}_{p^{-1},q}\,\beta^{x_1}_1\,\beta^{x_2}_2(1 \ominus \beta_1)^{n-x_1}_{p^{-1},q}(1 \ominus \beta_2)^{n-x_1-x_2}_{p^{-1},q},
		\end{eqnarray*}
	\end{small}
	where $x_j\in\{0,1,\cdots,n\}, x_1+x_2\leq n, s_j=\displaystyle\sum_{i=1}^{j}x_j, 0<\beta_j<1$, and $j\in\{1,2\}.$
	Besides, its 
	$(p^{-1},q)$-factorial moments  are given by:
	\begin{eqnarray*}
		E\big([X_1]_{m_1,p^{-1},q}\big)=[n]_{m_1,p^{-1},q}\,\beta^{m_1}_1,\,m_1\in\{0,1,\cdots,n\},
	\end{eqnarray*}
	\begin{eqnarray*}
		E\big([X_2]_{m_2,p^{-1},q}|X_1=x_1\big)=[n-x_1]_{m_2,p^{-1},q}\,\beta^{m_2}_2,\,m_2\in\{0,1,\cdots,n-x_1\},
	\end{eqnarray*}
	\begin{eqnarray*}
		E\bigg(\frac{[X_2]_{m_2,p^{-1},q}}{\big(p^{n-m_2-X_1} \ominus \beta_1\,q^{n-m_2-X_1}\big)^{m_2}_{p^{-1},q}}\bigg)=[n]_{m_2,p^{-1},q}\,\beta^{m_2}_2, \, m_2\in\{0,1,\cdots,n\},
	\end{eqnarray*}
	and
	\begin{eqnarray*}
		E\bigg(\frac{[X_1]_{m_1,p^{-1},q}[X_2]_{m_2,p^{-1},q}}{\big(p^{n-m_2-X_1} \ominus \beta_1\,q^{n-m_2-X_1}\big)^{m_2}_{p^{-1},q}}\bigg)=[n]_{m_1+m_2,p^{-1},q}\,\beta^{m_1}_1\,\beta^{m_2}_2,
	\end{eqnarray*}
	where $m_1\in\{0,1,\cdots,n-m_2\}$ and $m_2\in\{0,1,\cdots,n\}.$ Moreover, 
	the  $(p^{-1},q)$- covariance of the functions  $\big(p^{-n+1+X_1} - \beta_1\,q^{n-1-X_1}\big)^{-1}[X_2]_{p^{-1},q}$ and $[X_1]_{p^{-1},q}$  is given by:
	\begin{small}
		\begin{eqnarray*}
			Cov\bigg[[X_1]_{p^{-1},q}, \big(p^{-n+1+X_1} - \beta_1\,q^{n-1-X_1}\big)^{-1}[X_2]_{p^{-1},q}\bigg]=[n]_{p^{-1},q}\beta_1\beta_2\big([n-1]_{p^{-1},q}-[n]_{p^{-1},q}\big).
		\end{eqnarray*}
	\end{small} 
	\item[(iii)]
	The negative $(p^{-1},q)$- trinomial probability distribution of the second kind, with parameters $n,$ $\underline{\beta}=\big(\beta_1,\beta_2\big),$ $p,$ and, $q$  is given by:
	\begin{small}
		\begin{eqnarray*}
			P\big(\underline{V}=\underline{v}\big)=\genfrac{[}{]}{0pt}{}{n+v_1+v_2-1}{v_1,v_2}_{p^{-1},q}\beta^{v_1}_1\beta^{v_2}_2(1 \ominus \beta_1)^{n+v_2}_{p^{-1},q}(1 \ominus \beta_2)^{n}_{p^{-1},q},\,v_j\in\mathbb{N}\cup\{0\},
		\end{eqnarray*}
	\end{small}
	where $ 0<\beta_j<1$, and $j\in\{1,2\}.$
	Moreover, 
	its $(p^{-1},q)$-factorial moments of  are presented as follows:
	\begin{eqnarray*}
		E\big([V_2]_{m_2,p^{-1},q}\big)=\frac{[n+m_2-1]_{m_2,p^{-1},q}\,\beta^{m_2}_2}{\big(p^{n} \ominus \beta_2\,q^{n}\big)^{m_2}_{p^{-1},q}},
	\end{eqnarray*}
	\begin{eqnarray*}
		E\big([V_1]_{m_,p^{-1},q}|V_2=v_2\big)=\frac{[n+v_2+m_1-1]_{m_1,p^{-1},q}\,\beta^{m_1}_1}{\big(p^{n+v_2} \ominus \beta_1\,q^{n+v_2}\big)^{m_1}_{p^{-1},q}},
	\end{eqnarray*}
	\begin{eqnarray*}
		E\bigg(\frac{[V_1]_{m_1,p^{-1},q}}{\big(p^{n+V_2} \ominus \beta_1\,q^{n+V_2}\big)^{-m_1}_{p^{-1},q}}\bigg)=\frac{[n+m_1-1]_{m_1,p^{-1},q}\,\beta^{m_1}_1}{\big(p^{n} \ominus \beta_2\,q^{n}\big)^{m_1}_{p^{-1},q}},
	\end{eqnarray*}
	and
	\begin{eqnarray*}
		E\bigg(\frac{[V_1]_{m_1,p^{-1},q}[V_2]_{m_2,p^{-1},q}}{\big(p^{n+V_2} \ominus \beta_1\,q^{n+V_2}\big)^{-m_2}_{p^{-1},q}}\bigg)=\frac{[n+m_1+m_2-1]_{m_1+m_2,p^{-1},q}\,\beta^{m_1}_1\,\beta^{m_2}_2}{\big(p^{n} \ominus \beta_2\,q^{n}\big)^{m_1+m_2}_{p^{-1},q}},
	\end{eqnarray*}
	where $m_1\in\mathbb{N}\cup\{0\}$ and $m_2\in\mathbb{N}\cup\{0\}.$
	Furthermore,   the $(p^{-1},q)$-covariance of the functions  and $\widehat{V}:=(p^{-n-V_2} - \beta_2\,q^{n+V_2})[V_1]_{m_2,p^{-1},q}$ and $\overline{V}:=[V_2]_{m_1,p^{-1},q}$ is given by:
	\begin{eqnarray*}
		Cov\big(\widehat{V},\overline{V}\big)=\frac{[n]_{p^{-1},q}\beta_1\beta_2}{(p^{-n} - \beta_2\,q^{n})}\bigg(\frac{[n+1]_{p^{-1},q}}{(p^{-n-1} - \beta_2\,q^{n+1})}-\frac{[n]_{p^{-1},q}}{(p^{-n}- \beta_2\,q^{n})}\bigg).
	\end{eqnarray*}
\end{enumerate}
	\subsection{Trinomial distribution and Hounkonnou-Ngompe generalization of $q-$ Quesne algebra \cite{Hounkonnou&Ngompe07a} }
	Setting $\mathcal{R}(x,y)=\frac{xy-1}{(q-p^{-1})y},$ we obtain the following results:
	\begin{enumerate}
		\item[(i)] The Hounkonnou-Ngompe generalization of $q$- Quesne-trinomial  distribution of the first kind
		is given by:
		\begin{eqnarray*}
		P\big(Y_1=y_1,Y_2=y_2\big)=\genfrac{[}{]}{0pt}{}{n}{y_1,y_2}^Q_{p,q}\frac{\alpha^{y_1}_1\alpha^{y_2}_2\,p^{{n-y_1\choose 2}+{n-y_2\choose 2}}\,q^{-{y_1\choose 2}-{y_2\choose 2}}}{(1 \oplus \alpha_1)^{n}_{p,q}(1 \oplus \alpha_2)^{n-y_1}_{p,q}}
		\end{eqnarray*}
		and their factorial moments are:
		\begin{small}
			\begin{eqnarray*}
				E\big([Y_1]^Q_{m_1,p,q}\big)=\frac{[n]^Q_{m_1,p,q}\,\alpha^{m_1}_1\,q^{-{m_1\choose 2}}}{(1 \oplus \alpha_1)^{m_1}_{p,q}},\,m_1\in\{0,1,\cdots,n\},\\
				E\big([Y_2]^Q_{m_2,p,q}|Y_1=y_1\big)=\frac{[n-y_1]^Q_{m_2,p,q}\,\alpha^{m_2}_2\,q^{-{m_2\choose 2}}}{(1 \oplus \alpha_2)^{m_2}_{p,q}},\,m_2\in\{0,1,\cdots,n-y_1\},\\
				E\big([Y_2]^Q_{m_2,p,q}\big)=\frac{[n]^Q_{m_2,p,q}\,\alpha^{m_2}_2\,p^{{m_2\choose 2}+m_2(n-m_2)}\,q^{-{m_2\choose 2}}}{(1 \oplus \alpha_2)^{m_2}_{p,q}(p^{n-m_2} \oplus \alpha_1\,q^{n-m_2})^{m_2}_{p,q}},\,m_2\in\{0,1,\cdots,n\},
			\end{eqnarray*}
			and 
			\begin{eqnarray*}
				E\big([Y_1]^Q_{m_1,p,q}[Y_2]^Q_{m_2,p,q}\big)
				=\frac{[n]^Q_{m_1+m_2,p,q}\alpha^{m_1}_1\alpha^{m_2}_2\,p^{{m_2\choose 2}+m_2(n-m_2)}\,q^{-{m_1\choose 2}-{m_2 \choose 2}}}{(1 \oplus \alpha_1)^{m_1}_{p,q}(1 \oplus \alpha_2)^{m_2}_{p,q}(p^{n-m_2} \oplus \alpha_1\,q^{n-m_2})^{m_2}_{p,q}},
			\end{eqnarray*}
			where $m_1\in\{0,1,\cdots, n-m_2\}$ and $m_2\in\{0,1,\cdots, n\}.$ Moreover, the   covariance is derived as follows:
			\begin{eqnarray*}
				Cov\big([Y_1]^Q_{p,q},[Y_2]^Q_{\mathcal{R}(p,q)}\big)
				=\frac{p^{n-1}[n]^Q_{p,q}\,\alpha_1\,\alpha_2\big([n-1]^Q_{p,q}-[n]^Q_{p,q}\big)}{(1 + \alpha_1)(1 + \alpha_2)(p^{n-1} + \alpha_1\,q^{1-n})}.
			\end{eqnarray*}
		\end{small}
		\item[(ii)] The negative  Hounkonnou-Ngompe generalization of $q-$ Quesne-trinomial  distribution of the first kind, with parameters $n,$ $\underline{\alpha}=\big(\alpha_1,\alpha_2\big),$ $p,$ and, $q$ is presented by:
		\begin{small}
			\begin{eqnarray*}
				P\big(\underline{W}=\underline{w}\big)=\genfrac{[}{]}{0pt}{}{n+w_1+w_2-1}{w_1,w_2}^Q_{p,q}\frac{\alpha^{w_1}_1\alpha^{w_2}_2\,p^{{n-w_1\choose 2}+{n-w_2\choose 2}}\,q^{-{w_1\choose 2}-{w_2\choose 2}}}{(1 \oplus \alpha_1)^{n+w_1+w_2}_{p,q}(1 \oplus \alpha_2)^{n+w_2}_{p,q}},
			\end{eqnarray*}
		\end{small}
		where $w_j\in\mathbb{N}\cup\{0\}, 0<\alpha_j<1$, and $j\in\{1,2\}.$
		Besides,
		its factorial moments  are given as follows:
		\begin{small}
			\begin{eqnarray*}
				E\big([W_2]^Q_{m_2,p^{-1},q^{-1}}\big)=\alpha^{m_2}_2\,[n+m_2-1]^Q_{m_2,p,q},
			\end{eqnarray*}
			\begin{eqnarray*}
				E\big([W_1]^Q_{m_1,p^{-1},q^{-1}}|W_2=w_2\big)=\alpha^{m_1}_1\,[n+w_2+m_1-1]^Q_{m_1,p,q},
			\end{eqnarray*}
			\begin{eqnarray*}
				E\bigg(\frac{[W_1]^Q_{m_1,p^{-1},q^{-1}}}{\big(p^{n+W_2} \oplus \alpha_2\,q^{n+W_2}\big)^{m_1}_{p,q}}\bigg)=\alpha^{m_1}_1\,[n+m_1-1]_{m_1,p,q},
			\end{eqnarray*}
			and 
			\begin{small}
				\begin{eqnarray*}
					E\bigg(\frac{[W_1]^Q_{m_1,p^{-1},q^{-1}}\,[W_2]^Q_{m_2,p^{-1},q^{-1}}}{\big(p^{n+W_2} \oplus \alpha_2\,q^{n+W_2}\big)^{m_1}_{p,q}}\bigg)
					=\alpha^{m_1}_1\alpha^{m_2}_2\,[n+m_1+m_2-1]^Q_{m_1+m_2,p,q},
				\end{eqnarray*}
			\end{small}
			where $m_1\in\mathbb{N}\cup\{0\}$ and $m_2\in\mathbb{N}\cup\{0\}.$
		\end{small}
		Furthermore, 
		the covariance of $\widehat{W}:=(p^{n+W_2} \oplus \alpha_2\,q^{n+W_2})^{-1}[W_1]^Q_{m_1,p^{-1},q^{-1}}$ and $\overline{W}:=[W_2]^Q_{m_2,p^{-1},q^{-1}}$ is determined by:
		\begin{eqnarray*}
			Cov\big(\widehat{W},\overline{W}\big)=[n]^Q_{p,q}\alpha_1\alpha_2\big([n+1]^Q_{p,q}-[n]^Q_{p,q}\big).
		\end{eqnarray*}
		\item[(iii)]  The Hounkonnou-Ngompe generalization of $q-$ Quesne-trinomial distribution of the second kind, with parameters $n,$ $\underline{\beta}=\big(\beta_1,\beta_2\big),$ $p,$ and, $q$ is given by:
		\begin{small}
			\begin{eqnarray*}
				P\big(X_1=x_1,X_2=x_2\big)=\genfrac{[}{]}{0pt}{}{n}{x_1,x_2}^Q_{p,q}\,\beta^{x_1}_1\,\beta^{x_2}_2(1 \ominus \beta_1)^{n-x_1}_{p,q}(1 \ominus \beta_2)^{n-x_1-x_2}_{p,q},
			\end{eqnarray*}
		\end{small}
		where $x_j\in\{0,1,\cdots,n\}, x_1+x_2\leq n, s_j=\displaystyle\sum_{i=1}^{j}x_j, 0<\beta_j<1$, and $j\in\{1,2\}.$
		Besides, its 
		factorial moments  are given by:
		\begin{eqnarray*}
			E\big([X_1]^Q_{m_1,p,q}\big)=[n]^Q_{m_1,p,q}\,\beta^{m_1}_1,\,m_1\in\{0,1,\cdots,n\},
		\end{eqnarray*}
		\begin{eqnarray*}
			E\big([X_2]^Q_{m_2,p,q}|X_1=x_1\big)=[n-x_1]^Q_{m_2,p,q}\,\beta^{m_2}_2,\,m_2\in\{0,1,\cdots,n-x_1\},
		\end{eqnarray*}
		\begin{eqnarray*}
			E\bigg(\frac{[X_2]^Q_{m_2,p,q}}{\big(p^{n-m_2-X_1} \ominus \beta_1\,q^{n-m_2-X_1}\big)^{m_2}_{p,q}}\bigg)=[n]^Q_{m_2,p,q}\,\beta^{m_2}_2, \, m_2\in\{0,1,\cdots,n\},
		\end{eqnarray*}
		and
		\begin{eqnarray*}
			E\bigg(\frac{[X_1]^Q_{m_1,p,q}[X_2]^Q_{m_2,p,q}}{\big(p^{n-m_2-X_1} \ominus \beta_1\,q^{n-m_2-X_1}\big)^{m_2}_{p,q}}\bigg)=[n]^Q_{m_1+m_2,p,q}\,\beta^{m_1}_1\,\beta^{m_2}_2,
		\end{eqnarray*}
		where $m_1\in\{0,1,\cdots,n-m_2\}$ and $m_2\in\{0,1,\cdots,n\}.$ Moreover, 
		the   covariance of the functions  $\big(p^{n-1-X_1} - \beta_1\,q^{1+X_1-n}\big)^{-1}[X_2]^Q_{p,q}$ and $[X_1]^Q_{p,q}$  is given by:
		\begin{small}
			\begin{eqnarray*}
				Cov\bigg[[X_1]^Q_{p,q}, \big(p^{n-1-X_1} - \beta_1\,q^{1+X_{1}-n}\big)^{-1}[X_2]^Q_{p,q}\bigg]=[n]^Q_{p,q}\beta_1\beta_2\big([n-1]^Q_{p,q}-[n]^Q_{p,q}\big).
			\end{eqnarray*}
		\end{small} 
		\item[(iii)]
		The negative Hounkonnou-Ngompe generalization of $q-$ Quesne- trinomial probability distribution of the second kind, with parameters $n,$ $\underline{\beta}=\big(\beta_1,\beta_2\big),$ $p,$ and, $q$  is given by:
		\begin{small}
			\begin{eqnarray*}
				P\big(\underline{V}=\underline{v}\big)=\genfrac{[}{]}{0pt}{}{n+v_1+v_2-1}{v_1,v_2}^Q_{p,q}\beta^{v_1}_1\beta^{v_2}_2(1 \ominus \beta_1)^{n+v_2}_{p,q}(1 \ominus \beta_2)^{n}_{p,q},\,v_j\in\mathbb{N}\cup\{0\},
			\end{eqnarray*}
		\end{small}
		where $ 0<\beta_j<1$, and $j\in\{1,2\}.$
		Moreover, for $m_1\in\mathbb{N}\cup\{0\}$ and $m_2\in\mathbb{N}\cup\{0\},$ 
		its factorial moments of  are presented as follows:
		\begin{eqnarray*}
			E\big([V_2]^Q_{m_2,p,q}\big)=\frac{[n+m_2-1]^Q_{m_2,p,q}\,\beta^{m_2}_2}{\big(p^{n} \ominus \beta_2\,q^{n}\big)^{m_2}_{p,q}},
		\end{eqnarray*}
		\begin{eqnarray*}
			E\big([V_1]^Q_{m_,p,q}|V_2=v_2\big)=\frac{[n+v_2+m_1-1]^Q_{m_1,p,q}\,\beta^{m_1}_1}{\big(p^{n+v_2} \ominus \beta_1\,q^{n+v_2}\big)^{m_1}_{p,q}},
		\end{eqnarray*}
		\begin{eqnarray*}
			E\bigg(\frac{[V_1]^Q_{m_1,p,q}}{\big(p^{n+V_2} \ominus \beta_1\,q^{n+V_2}\big)^{-m_1}_{p,q}}\bigg)=\frac{[n+m_1-1]^Q_{m_1,p,q}\,\beta^{m_1}_1}{\big(p^{n} \ominus \beta_2\,q^{n}\big)^{m_1}_{p,q}},
		\end{eqnarray*}
		and
		\begin{eqnarray*}
			E\bigg(\frac{[V_1]^Q_{m_1,p,q}[V_2]^Q_{m_2,p,q}}{\big(p^{n+V_2} \ominus \beta_1\,q^{n+V_2}\big)^{-m_2}_{p,q}}\bigg)=\frac{[n+m_1+m_2-1]^Q_{m_1+m_2,p,q}\,\beta^{m_1}_1\,\beta^{m_2}_2}{\big(p^{n} \ominus \beta_2\,q^{n}\big)^{m_1+m_2}_{p,q}}.
		\end{eqnarray*}
		Furthermore,   the covariance of the functions  $\widehat{V}:=(p^{n+V_2} \ominus \beta_2\,q^{n+V_2})[V_1]^Q_{m_2,p,q}$ and $\overline{V}:=[V_2]^Q_{m_1,p,q}$ is given by:
		\begin{eqnarray*}
			Cov\big(\widehat{V},\overline{V}\big)=\frac{[n]^Q_{p,q}\beta_1\beta_2}{(p^{n} - \beta_2\,q^{n})}\bigg(\frac{[n+1]^Q_{p,q}}{(p^{n+1} - \beta_2\,q^{n+1})}-\frac{[n]^Q_{p,q}}{(p^{n}- \beta_2\,q^{n})}\bigg).
		\end{eqnarray*}
	\end{enumerate}
\section*{Acknowledgments}
This research was partly supported by the SNF Grant No. IZSEZ0\_206010.

\end{document}